\documentclass[a4paper,10pt]{article}

\usepackage{amsmath, amssymb, amsthm, mathtools}
\usepackage{cite}
\usepackage{color,soul}  
\usepackage{flafter}  
\usepackage{fullpage}
\usepackage{graphicx}
\usepackage[mathscr]{eucal}
\usepackage[small]{caption}
\usepackage{subfigure}
\usepackage{booktabs}
\usepackage{epstopdf}
\usepackage{bm} 


\newif\ifupdatetikz
\updatetikzfalse 
\ifupdatetikz
	\usepackage{tikz}
	\usepackage{pgfplots}
	\pgfplotsset{
		compat=newest,
		tick label style={font=\scriptsize},
		label style={font=\scriptsize},
		legend style={font=\tiny}
	}
	\usepgfplotslibrary{external}
	\usetikzlibrary{shapes,arrows,positioning}
	\tikzset{cross/.style={cross out, draw=black, minimum size=2*(#1-\pgflinewidth), inner sep=0pt, outer sep=0pt},cross/.default={2pt}}
	\tikzstyle{axis} = [-latex',line width=1.25]
	
\else
	\usepackage{tikzexternal}
	\tikzexternalize
	
\fi
\newcommand{\figname}[1]{\tikzsetnextfilename{#1}}


\newtheoremstyle{mystyle}
  {}
  {}
  {\itshape}
  {}
  {\bfseries}
  {.}
  { }
  {\thmname{#1}\thmnumber{ #2}\thmnote{ (#3)}}

\theoremstyle{mystyle}

\newtheorem{thm}{Theorem}[section]

\newtheorem{ex}[thm]{Example}

\newtheorem{rem}[thm]{Remark}


\vfuzz2pt 
\hfuzz2pt 


\let\originalleft\left
\let\originalright\right
\renewcommand{\left}{\mathopen{}\mathclose\bgroup\originalleft}
\renewcommand{\right}{\aftergroup\egroup\originalright}

\newcommand{\abs}[1]{\lvert#1\rvert}

\newcommand{\averageV}[1]{\left\langle #1 \right\rangle}
\newcommand{\Bcal}{\boldsymbol{\mathcal{B}}}

\newcommand{\C}{\mathbb{C}}			

\newcommand{\cell}{\mathcal{C}}		

\newcommand{\DF}{\mathbf{D}}
\newcommand{\Dt}{\Delta t}
\newcommand{\Dtime}{\mathrm{D}_t}
\newcommand{\Dxv}{\mathrm{D}_{\xGrid,\vGrid}}
\newcommand{\diag}[1]{\operatorname{diag}\left(#1\right)}

\newcommand{\diffMatrix}{\mathbf{B}}
\newcommand{\discriminant}{\Delta}
\newcommand{\disk}{\mathcal{D}}
\newcommand{\diskpfe}{\disk^{\text{\scriptsize{PFE}}}}
\newcommand{\diskprk}[1]{\disk^{\text{\scriptsize{PRK},#1}}}
\newcommand{\dt}{\delta t}
\newcommand{\dx}{\Delta x}
\newcommand{\dy}{\Delta y}
\newcommand{\epsi}{\varepsilon}

\newcommand{\eye}{\mathbf{I}}

\newcommand{\F}{\mathbf{F}}					

\newcommand{\fExact}{\tilde{\fSDSys}} 		
\newcommand{\fF}{\hat{\mathbf{F}}}
\newcommand{\fSDSys}{\mathbf{f}} 		

\newcommand{\hydroLimit}{\epsi \to 0}
\newcommand{\lambdavec}{\boldsymbol{\lambda}}

\newcommand{\Ma}{\mathcal{M}}						


\newcommand{\MF}{\mathbf{M}}


\newcommand{\norm}[1]{\lVert#1\rVert}

\newcommand{\PF}{\mathbf{P}}
\newcommand{\R}{\mathbb{R}}				

\newcommand{\Sdt}{S_{\dt}}
\newcommand{\SdtF}{\mathbf{S}_{\dt}}

\newcommand{\set}[2]{\left(#1\right)_{#2=1}^{\uppercase{#2}}}


\newcommand{\Sp}[1]{\mathrm{Sp}\left(#1\right)}

\newcommand{\tauvec}{\boldsymbol{\tau}}
\newcommand{\uSys}{\mathbf{u}}			
\newcommand{\uSDSys}{\boldsymbol{u}} 	
\newcommand{\uExact}{\tilde{\uSDSys}}

\renewcommand{\v}{\mathbf{v}}
\newcommand{\vMacroOneD}{\bar{v}}
\newcommand{\vMacroMultiD}{\mathbf{\bar{v}}}
\newcommand{\vGrid}{\boldsymbol{v}}

\newcommand{\vmax}{v_{\max}}
\newcommand{\vstar}{v_{*}}
\newcommand{\x}{\mathbf{x}}
\newcommand{\xGrid}{\boldsymbol{x}}

\newcommand{\Cnm}[2]{\C^{#1 \times #2}}
\newcommand{\Rnm}[2]{\R^{#1 \times #2}}

\newcommand{\added}[1]{{\color{black}{#1}}}   
\newcommand{\moved}[1]{{\color{black}{#1}}}   
\newcommand{\changed}[1]{{\color{black}{#1}}} 

\newcommand{\Acal}{\boldsymbol{\mathcal{A}}}		
\newcommand{\f}{\mathbf{f}} 					
\newcommand{\Mv}{\Ma_v}					
\newcommand{\Mj}{\Ma_j}		 				
\newcommand{\Max}{\bm{\Ma}}					
\newcommand{\Mav}{\Max_v}					
\newcommand{\Maxj}{\Max_j}					
\newcommand{\Maxv}{\Max_\v}					
\renewcommand{\u}{\mathbf{u}} 				


\begin{document}

\title{A high-order relaxation method with projective integration for solving nonlinear systems of hyperbolic conservation laws}

\author{
Pauline Lafitte 
	\thanks{Laboratoire de Math\'ematiques Appliqu\'ees aux Syst\`emes, Ecole Centrale Paris, Grande Voie des Vignes, 92290 Ch\^atenay-Malabry, France ({\tt pauline.lafitte@ecp.fr}).} \and 
Ward Melis
	\thanks{Department of Computer Science, K.U. Leuven, Celestijnenlaan 200A, 3001 Leuven, Belgium ({\tt ward.melis@cs.kuleuven.be}).} \and
Giovanni Samaey
	\thanks{Department of Computer Science, K.U. Leuven, Celestijnenlaan 200A, 3001 Leuven, Belgium ({\tt giovanni.samaey@cs.kuleuven.be}).}}

\maketitle

\begin{abstract}	
	We present a general, high-order, fully explicit relaxation scheme \added{which can be applied to any system} of nonlinear hyperbolic conservation laws in multiple dimensions. The scheme consists of two steps. \added{In a first (relaxation) step,} the nonlinear hyperbolic conservation law is approximated by a kinetic equation with stiff BGK source term. Then, this kinetic equation is integrated in time using a projective integration method. After taking a few small (inner) steps with a simple, explicit method (such as direct forward Euler) to damp out the stiff components of the solution, the time derivative is estimated and used in an (outer) Runge-Kutta method of arbitrary order.  We show that, with an appropriate choice of inner step size, the time step restriction on the outer time step is similar to the CFL condition for the hyperbolic conservation law. Moreover, the number of inner time steps is also independent of the stiffness of the BGK source term. We discuss stability and consistency, and illustrate with numerical results (linear advection, Burgers' equation and the shallow water and Euler equations) in one and two spatial dimensions.
\end{abstract}

\section{Introduction} \label{sec:introduction}
Hyperbolic conservation laws arise in numerous physical applications, such as fluid dynamics, plasma physics, traffic modeling and electromagnetism (see, e.g., \cite{whitham2011linear,LeVeque2002}). They express the conservation of physical quantities (such as mass, momentum, or energy) and may be supplemented with boundary conditions that control influx or outflux at the boundaries of the physical domain \cite{LeVeque2002}. In this paper, we consider a system of hyperbolic conservation laws in multiple spatial dimensions:
\begin{equation} \label{eq:cons_law_system} 
	\partial_t \uSys + \nabla_{\x} \cdot \F(\uSys) = 0, 
\end{equation}
or, equivalently,
\begin{equation} \label{eq:cons_law_system_long} 
	\partial_t \uSys + \sum_{d=1}^{D} \partial_{x^d}\F^d(\uSys) = 0, 
\end{equation}
in which $\x=\set{x^d}{d} \in \R^D$ represents the space variables ($D$ being the number of spatial dimensions), $\uSys(\x,t) :=  \set{u_m(\x,t)}{m} \in \R^{M}$ denotes the conserved quantities, and $\F(\uSys) \in \Rnm{M}{D}$ corresponds to the flux functions.
 
Hyperbolic conservation laws are often solved using a finite volume method \cite{LeVeque2002,morton2005numerical}, which is derived from the integral expression of the conservation law. To this end, in a scalar one-dimensional setting and with a spatially uniform grid, the domain is divided in $I$ cells $\cell_i = [x_{i-1/2},x_{i+1/2}]$ with constant cell width $\dx$ over which the cell average of the solution $u(x,t)$ to the conservation law 
\begin{equation} \label{eq:cons_law_1d_scalar} 
	\partial_t u + \partial_x F(u)= 0,
\end{equation}	
is approximated at time $t=t^n$ by
\begin{equation} \label{eq:FV_average_solution}
	U_i^n \approx \frac{1}{\dx}\int_{\cell_i} u(x,t^n)dx. 
\end{equation}
Note that boldface is removed whenever the quantities are scalar. A numerical scheme is then constructed by integrating the conservation law \eqref{eq:cons_law_1d_scalar} in space over the cell $\cell_i$ and in time from $t^n$ to $t^{n+1}$ to obtain
\begin{equation} \label{FV_scheme} 
	U_i^{n+1} = U_i^n - \frac{\Dt}{\dx}\left(F_{i+1/2}^n - F_{i-1/2}^n\right), 
\end{equation}
in which $\Dt = t^{n+1} - t^n$ and the \emph{numerical flux} satisfies
\begin{equation} \label{eq:FV_numerical_flux}
	F_{i \pm 1/2}^n \approx \frac{1}{\Dt}\int_{t^n}^{t^{n+1}} F\left(u(x_{i \pm 1/2},t)\right)dt. 
\end{equation}
Clearly, equation \eqref{FV_scheme} is conservative by construction. The numerical fluxes $F_{i \pm 1/2}^n$ can be obtained by constructing an (approximate) Riemann solver, based on a (possibly high-order) reconstruction of the solution in each of the cells using interpolation over the neighboring cells \cite{LeVeque2002,eno_weno}. \moved{However, in the general nonlinear case, these spatial discretizations require the (possibly tedious) computation of the solutions of local Riemann problems.} 

\added{Relaxation methods offer an interesting alternative in which the nonlinear hyperbolic conservation law is replaced by a linear transport equation with a stiff nonlinear (but local) source term,} see, e.g., discrete kinetic schemes in \cite{Liu1987,Jin1998,Jin1995} and, in particular, \cite{Aregba-Driollet2000} which also contains a brief historical overview. In a relaxation method, the conservation law \eqref{eq:cons_law_system} is approximated by a problem of higher dimension containing a small relaxation parameter $\epsi$ such that, when $\epsi$ tends to zero, the original problem is recovered. In this paper, we will consider the relaxation problem to be a kinetic BGK equation. In a scalar one-dimensional setting, this equation describes the evolution of a distribution function $f^\epsi(x,v,t)$ of particles at position $x$ with velocity $v$ at time $t$ and takes the following form: 
\begin{equation} \label{eq:kin_eq_1d_scalar} 
	\partial_t f^\epsi + v\partial_x f^\epsi = \frac{1}{\epsi}\left(\Mv(u^\epsi) - f^\epsi\right).
\end{equation} 
The left hand side of equation \eqref{eq:kin_eq_1d_scalar} describes the transport of the particles whereas the right hand side represents the collisions between particles, which is modeled as a linear relaxation to the Maxwellian $\Mv(u^\epsi)$ with a relaxation time $\epsi$. \moved{The idea is that some of the difficulties associated with the original problem are avoided, while, for sufficiently small $\epsi$, the relaxation problem is a good approximation of the problem of interest.} \added{In particular,} the advantage of the kinetic equation \eqref{eq:kin_eq_1d_scalar} over the conservation law \eqref{eq:cons_law_1d_scalar} is the fact that the advection term in \eqref{eq:kin_eq_1d_scalar} is now linear, \added{removing the difficulties associated with the high-order discretization of a nonlinear flux term}. The disadvantage is the appearance of a stiff source term, which requires special care during time integration. The first methods, proposed in \cite{Jin1995,Aregba-Driollet2000} are based on splitting techniques. \added{As a consequence, the order in time is restricted to 2 and can only be improved by nontrivial manipulations, see \cite{Csomos2008}}. More recently, several \emph{asymptotic-preserving} methods based on IMEX techniques (in the sense of Jin \cite{Jin1999}) have been proposed that integrate the Boltzmann equation in the hyperbolic and diffusive regimes with a computational cost that is independent of $\epsi$ (see \cite{filbet2010class} and references within).  An appealing idea along this line of thought, based on IMEX Runge-Kutta methods, is presented in \added{\cite{Boscarino2009,Boscarino2013}}. Unfortunately, the proposed method is not very robust since it breaks down in intermediate \added{or hydrodynamic regimes}. An improvement was proposed in \cite{Dimarco2013asymptotic}.

\moved{In this paper, we propose to use a projective integration method to solve the stiff relaxation systems with an arbitrary order of accuracy in time. We will show that the resulting scheme constitutes \added{a flexible,} robust and fully explicit alternative to splitting and IMEX methods, while avoiding the construction of complicated and problem-specific (approximate) Riemann solvers. Projective integration methods were proposed in \cite{Gear2003projective} for stiff systems of ordinary differential equations and analyzed in \cite{Lafitte2012} for kinetic equations with a diffusive scaling. An arbitrary order version, based on Runge-Kutta methods, has been proposed recently in \cite{Lejon2016}, where it was also analyzed for kinetic equations with an advection-diffusion limit.} \added{Projective integration is particularly suited for stiff problems with a clear spectral gap.} In such stiff problems, the fast modes, corresponding to the Jacobian eigenvalues with large negative real parts, decay quickly, whereas the slow modes correspond to eigenvalues of smaller magnitude and are the solution components of practical interest. Projective integration allows a stable yet explicit integration of such problems by first taking a few small (inner) steps with a simple, explicit method, until the transients corresponding to the fast modes have died out, and subsequently projecting (extrapolating) the solution forward in time over a large (outer) time step. \added{Besides being robust and fully explicit, the resulting projective integration relaxation method is very appealing for nonlinear hyperbolic conservation laws because of its flexibility: once a solver is available, applying it to a different nonlinear hyperbolic conservation law merely amounts to changing the definition of the Maxwellian function $\Mv(u)$ in equation~\eqref{eq:kin_eq_1d_scalar}, leaving both the space and time discretizations untouched.} 

Projective integration fits within recent research efforts on numerical methods for multiscale simulation \cite{E2003a,Kevrekidis2003,Kevrekidis2009}.
\added{In this context, projective integration is a useful technique to effectively deal with problems in which there is a macroscopic (slow) dynamics whose mathematical formulation is not known and that can be captured ``on-the-fly'' by a short (appropriately initialized) microscopic simulation. Then, a few small steps of the full microscopic dynamics are combined with an extrapolation of the macroscopic, slow degrees of freedom only, and the resulting method is called \emph{coarse projective integration}. Examples are, amongst others, bacterial chemotaxis \cite{Setayeshgar2005}, chemical reactions \cite{Rico-Martinez} and disease modeling \cite{Cisternas2004}. For more examples, we refer to \cite{Kevrekidis2009}. To conclude, we also mention alternative approaches to obtain a higher-order projective integration scheme which have been proposed in \cite{Lee2007,Rico-Martinez}; see also \cite{Eriksson2004,Sommeijer1990,Vandekerckhove2007} for related work.}

The remainder of this paper is structured as follows. In section \ref{sec:kinetic}, we introduce the kinetic equations that form the basis of the relaxation method, and discuss their asymptotic equivalence with the original hyperbolic problem. In section \ref{sec:proj_int}, we describe the projective integration method that will be used to integrate these kinetic equations. We then \added{analyze} convergence of the resulting projective integration relaxation method for hyperbolic conservation laws in section \ref{sec:stability}, including the choice of appropriate method parameters. \added{This analysis is based on the results in \cite{Lejon2016}, for which we provide a number of alternative, simplified proofs that are specific for the relaxation systems of section~\ref{sec:kinetic}.} Section \ref{sec:results} reports the results of extensive numerical tests for a set of benchmark problems in both one and two space dimensions: linear advection, nonlinear conservation, the dam-break problem and Sod's shock test. We conclude in section \ref{sec:conclusions} with a brief discussion and some ideas for future work.
 
\section{Relaxation systems} \label{sec:kinetic}

\subsection{Kinetic equation and hydrodynamic limit}
To solve equation \eqref{eq:cons_law_system}, we introduce, as in \cite{Aregba-Driollet2000}, the (hyperbolically scaled) kinetic equation 
\begin{equation} \label{eq:kin_eq_system} 
	\partial_t \f^\epsi + \v\cdot\nabla_{\x}\cdot \f^\epsi = \frac{1}{\epsi}(\Maxv(\u^\epsi) - \f^\epsi),
\end{equation}
or, equivalently, 
\begin{equation} \label{eq:kin_eq_system_long} 
	\partial_t\f^\epsi + \sum_{d=1}^{D} v^d\partial_{x^d} \f^\epsi = \frac{1}{\epsi}(\Maxv(\u^\epsi) - \f^\epsi), 
\end{equation}
modeling the evolution of a vector of particle distribution functions $\f^\epsi(\x,\v,t) = (f^\epsi_m(\x,\v,t))_{m=1}^{M} \in \R^{M}$. The particle positions and velocities are represented as $\x = \set{x^d}{d} \in \R^D$ and $\v = \set{v^d}{d} \in V \subset \R^D$, respectively, and the right hand side of \eqref{eq:kin_eq_system_long} represents a BGK collision operator \cite{Bhatnagar1954}, modeling linear relaxation of $\f^\epsi$ to a Maxwellian distribution $\Maxv(\uSys^\epsi) \in \R^{M}$, in which the argument ${\uSys^\epsi(\x,t) = \averageV{\f^\epsi(\x,\v,t)}}$ is the density, obtained via averaging over the measured velocity space $(V,\mu)$, 
\begin{equation} \label{eq:V_average}
	\uSys := \averageV{\f} = \int_V \f d\mu(\v).
\end{equation} 

The advantage of this kinetic formulation is that the advection term is now linear, and therefore easier to discretize. The disadvantage is the increased dimension, as well as the introduction of the stiff source term of size $O(1/\epsi)$. The projective integration scheme that we will propose in section \ref{sec:proj_int} allows to integrate this stiff source term using an explicit method of arbitrary order.

To ensure that the kinetic equation \eqref{eq:kin_eq_system_long} converges to the conservation law \eqref{eq:cons_law_system} in the hydrodynamic limit $\hydroLimit$, one requires 
\begin{equation} \label{eq:maxwellian_conditions}
	\begin{dcases}
      \averageV{\Maxv(\uSys)} = \uSys, \\
      \averageV{v^d \Maxv(\uSys)} = \F^d(\uSys), \qquad 1 \le d \le D. 
	\end{dcases}
\end{equation}
Then, one can show \cite{Aregba-Driollet2000} that, in the limit of $\hydroLimit$, the kinetic model \eqref{eq:kin_eq_system_long} is approximated by the following equation: 
\begin{equation} \label{eq:kin_eq_limit}
	\partial_t \uSys^\epsi + \nabla_{\x}\cdot \F(\uSys^\epsi) = \epsi \nabla_{\x}\cdot (\diffMatrix\nabla_{\x}\uSys^\epsi), 
\end{equation}
or, equivalently,
\begin{equation} \label{eq:kin_eq_limit_long}
	\partial_t \uSys^\epsi + \sum_{d=1}^{D}\partial_{x^d} \F^d(\uSys^\epsi)= \epsi \sum_{d=1}^D\partial_{x^d} \left(\sum_{d'=1}^D\diffMatrix_{dd'}\partial_{x^{d'}}\uSys^\epsi\right), 
\end{equation}
with the diffusion matrix $\diffMatrix$ given as
\begin{equation} \label{eq:diffusion_matrix}
	\diffMatrix_{dd'}(\uSys) := \averageV{v^dv^{d'}\partial_{\uSys} \Maxv(\uSys)} - \partial_{\uSys} \F^{d}\partial_\uSys \F^{d'},
\end{equation}
in which the $M \times M$ matrices $\partial_{\uSys} \Maxv(\uSys)$ and $\partial_{\uSys} \F^{d}$ represent the Jacobian matrices of $\Maxv(\uSys)$ and $\F(\uSys)$, respectively.

Clearly, equation \eqref{eq:kin_eq_system_long} is consistent with equation \eqref{eq:cons_law_system} to order $1$ in $\epsi$. 
Moreover, the analysis reveals an additional condition on $\Ma_\v$ and $V$. Indeed, to ensure the parabolicity of \eqref{eq:kin_eq_limit_long}, the diffusion matrix $\diffMatrix$ should be positive definite. This leads to the so-called \emph{subcharacteristic condition} \cite{Bouchut1999,Aregba-Driollet2000},
\begin{equation} \label{eq:subcharacteristic_condition} 
	\sum_{d,d'=1}^D \left(\diffMatrix_{dd'}(\uSys) \xi^{d'}\cdot\xi^d\right) \ge 0, 
\end{equation}
for all $\xi^d$, $1\le d \le D$ in $\R^{M}$.

In what follows, we will always assume that the velocity space is discrete and of the form
\begin{equation} \label{eq:V_definition}
	V := \{\v_{j}\}_{j=1}^{J}, \qquad d\mu(\v)=\sum_{j=1}^J w_j \delta(\v-\v_j)d\v,
\end{equation}
with $\v_j$ denoting the chosen velocities and $w_j$ the corresponding weights. Due to this choice of $V$ the kinetic equation \eqref{eq:kin_eq_system} breaks up into a system of $J$ coupled partial differential equations,
\begin{equation} \label{eq:system_part_diff_eq} 
	\partial_t \f^\epsi_j + \v_j\cdot\nabla_{\x}\cdot \f^\epsi_j = \frac{1}{\epsi}(\Maxj(\uSys^\epsi) - \f^\epsi_j), \qquad  1 \le j \le J, 
\end{equation}
in which $\f^\epsi_j(\x,t) \equiv \f^\epsi(\x,\v_j,t)$, and the only coupling between different velocities is through the computation of $\uSys^\epsi$. As $\hydroLimit$, a Chapman-Enskog expansion allows to write
\begin{equation} \label{eq:chapman_enskog_first_order}
	\f^\epsi_j = \Maxj(\uSys^\epsi) +O(\epsi),
\end{equation}
so that, injecting it in \eqref{eq:system_part_diff_eq} and taking the mean value over $V$, we get
\begin{equation*} 
	\partial_t \averageV{\Maxj(\uSys^\epsi)}+\nabla_{\x}\cdot \averageV{\v_j\cdot\Maxj(\uSys^\epsi)} = O(\epsi).
\end{equation*}
Finally, the compatibility conditions \eqref{eq:maxwellian_conditions} imply
\begin{equation} \label{eq:system_part_diff_eq_limit_long}
	\partial_t \uSys^\epsi + \sum_{d=1}^D \partial_{x^d} \F^d(\uSys^\epsi) = O(\epsi).
\end{equation}

\begin{rem}[Minimal number of velocities] \label{rem:J2}
	In the relaxation problem \eqref{eq:system_part_diff_eq}, the minimal number $J$ of discrete velocities depends on the spatial dimension of the problem. \added{In particular, one needs to ensure -- at least -- that there is a velocity associated to each possible direction of motion. There are $J=2$ possible velocity directions in 1D (left, right) and $J=4$ possible directions in 2D (left, right, up, down)}. Since the computational complexity of the relaxation system is proportional to $J$, we will not consider higher values of $J$ here. Hence, the one-dimensional stability analysis in section~\ref{sec:stability} will be performed specifically for $J=2$, and in the experiments in section \ref{sec:results}, we will only use $J=2$ in 1D and $J=4$ in 2D. Finally, we stress that when choosing $J=2$ in 1D, the form of the discrete kinetic system given in equation \eqref{eq:system_part_diff_eq} coincides precisely with the relaxation system introduced by Jin and Xin in \cite{Jin1995}.
\end{rem}

\added{
\begin{rem}[Choice of $\epsi$] 
	From equation~\eqref{eq:system_part_diff_eq_limit_long}, it is clear that the relaxation system~\eqref{eq:system_part_diff_eq} contains a modeling error that is proportional to $\epsi$. In our projective integration schemes, however, a finite value of $\epsi$ will need to be chosen.  This choice will also be dictated by the numerical schemes; in particular, roundoff errors will become important as $\hydroLimit$, see section~\ref{subsec:consist}.
\end{rem}
}

\subsection{One-dimensional examples} \label{subsec:1d}
In one space dimension, we write equation \eqref{eq:cons_law_system} as 
\begin{equation} \label{eq:cons_law_1d_system}
	\partial_t \uSys + \partial_x \F(\uSys)= 0,
\end{equation}
in which $t \ge 0$ (resp. $x \in \R$) represents the time (resp. space) variable, ${\uSys(x,t) = \set{u_m(x,t)}{m} \in \R^{M}}$ embodies the conserved quantities, and $\F(\uSys) = \set{F_m(\uSys)}{m} \in \R^{M}$ denotes the flux functions. Correspondingly, the kinetic equation \eqref{eq:kin_eq_system_long} becomes
\begin{equation} \label{eq:kin_eq_1d} 
	\partial_t \f^\epsi + v \partial_x \f^\epsi = \frac{1}{\epsi}(\Mav(\uSys^\epsi) - \f^\epsi),
\end{equation}
with the particle distribution function $\f^\epsi(x,v,t) =\set{f^\epsi_m(x,v,t)}{m} \in \R^{M}$, and the particle velocities represented as $v \in V \subset \R$.

In equation \eqref{eq:kin_eq_1d}, the Maxwellian $\Mav$ is as yet not completely defined, as we only require the conditions \eqref{eq:maxwellian_conditions} to be satisfied. 
A physically relevant Maxwellian that corresponds to these conditions, is
\begin{equation} \label{eq:maxwellian_realistic_1d} 
	\Mav(\uSys^\epsi) = \uSys^\epsi + \frac{v\F(\uSys^\epsi)}{\averageV{v^2}},
\end{equation}
in which $V=\R$ with measure
\begin{equation} \label{eq:V_measure_gaussian} 
	d\mu(v) = \frac{1}{\sqrt{2\pi\sigma^2}}\exp\left(-\frac{v^2}{2\sigma^2}\right)dv.
\end{equation}
To satisfy the subcharacteristic condition given in \eqref{eq:subcharacteristic_condition}, the variance $\sigma^2 \in \R^+$ of the measure $d\mu(v)$ needs to be chosen appropriately. \added{For instance, in the scalar case, using equations \eqref{eq:diffusion_matrix} and \eqref{eq:maxwellian_realistic_1d} we obtain the following constraint:
\[ \averageV{v^2\left(1 + \frac{vF'(u)}{\averageV{v^2}}\right)} \ge (F'(u))^2. \]
Since this velocity space is odd symmetric, meaning that $\int_V h(v)d\mu(v) = 0$ for every odd function $h(v)$, this condition further reduces to:
\[ \averageV{v^2} \ge (F'(u))^2. \]
For the Gaussian measure in equation \eqref{eq:V_measure_gaussian} we have $\averageV{v^2} = \sigma^2$. Consequently, we require ${\sigma \ge \max_u\abs{F'(u)}}$ to ensure parabolicity of equation \eqref{eq:kin_eq_limit_long}.} Based on \eqref{eq:V_measure_gaussian}, we choose a discrete measured \added{symmetric} velocity space with an even number $J$ of velocities that satisfy $v_{J-j+1}\equiv -v_j$.
From the measure given in \eqref{eq:V_measure_gaussian}, these discrete velocities $\set{v_j}{j}$ and weights $\set{w_j}{j}$ are derived as the nodes and weights of the corresponding Gauss-Hermite quadrature. In particular, for $J=2$, this results in $v_{j}=\pm\sigma$, with corresponding weights $w_j=1/2$.

An alternative suggestion, that is equivalent to the above choice for $J=2$ and $\sigma=1$, was proposed in \cite{Bouchut1999}. There, a Maxwellian of the form
\begin{equation} \label{eq:maxwellian_artificial_1d} 
	\Mav(\uSys^\epsi) = \uSys^\epsi + \frac{\F(\uSys^\epsi)}{v} 
\end{equation}
is proposed. For both choices \eqref{eq:maxwellian_realistic_1d} and \eqref{eq:maxwellian_artificial_1d}, the conditions \eqref{eq:maxwellian_conditions} can be seen to be satisfied. In case of the Maxwellian given in equation \eqref{eq:maxwellian_artificial_1d}, the specific values of the velocities $v_j$ need to be chosen such that the subcharacteristic condition \eqref{eq:subcharacteristic_condition} is satisfied. When we further restrict to a scalar case, i.e., $M=1$,  
\begin{equation} \label{eq:cons_law_1d_scalar_again} 
	\partial_t u + \partial_x F(u)= 0, 
\end{equation}
\added{and choosing an odd symmetric velocity space, the subcharacteristic condition again gives rise to the condition $\averageV{v^2} \ge (F'(u))^2$, which is always satisfied when choosing the discrete velocities as:}
\begin{equation} \label{eq:subcharacteristic_condition_artificial} 
	\added{\abs{v_j} \ge \max_u\abs{F'(u)}, \quad 1 \le j \le J.}
\end{equation}
\added{The corresponding weights are chosen as $w_j = 1/J$.} Note again that all boldfaced typesetting is removed for a scalar case.

For the numerical illustrations, we choose concretely the following examples:
\begin{ex} The scalar linear advection equation,
	\begin{equation} \label{eq:flux_linear_advection_1d_scalar} 
		F(u) = a\cdot u, \qquad a \in \R. 
	\end{equation}
\end{ex}
\begin{ex} The scalar Burgers' equation,
	\begin{equation} \label{eq:flux_burgers_1d_scalar}
		F(u)=u^2/2.
	\end{equation}
\end{ex}
\begin{ex} The one-dimensional Euler equations,
	\begin{align} 
		\uSys &= (\rho, \rho\vMacroOneD, E), \label{eq:euler_conserved_vars} \\
	  	\F(\uSys) &= (\rho\vMacroOneD, \rho\vMacroOneD^2+P, \added{(E+P)\vMacroOneD}), \label{eq:euler_1d}
	\end{align}
	with the equation of state	
	\begin{equation} \label{eq:euler_eq_state_1d}
		P =  (\gamma-1)\left(E-\frac{1}{2}\rho\vMacroOneD^2\right).
	\end{equation}
\end{ex}

\subsection{Two-dimensional examples} \label{sec:2D}
In two space dimensions, we write equation \eqref{eq:cons_law_system} as 
\begin{equation} \label{eq:cons_law_2d_system}
	\partial_t \uSys + \partial_{x} \F^x(\uSys) + \partial_{y} \F^y(\uSys)= 0, 
\end{equation}
where $t \ge 0$ (resp. $x,y \in \R$) represent the time (resp. space) variables, ${\uSys(x,y,t) = (u_m(x,y,t))_{m=1}^{M} \in \R^{M}}$ correspond to the conserved quantities, and $\F^{x,y}(\uSys)=(F^{x,y}_m(\uSys))_{m=1}^{M}\in\R^{M}$ denote the fluxes in the $x$ and $y$ direction, respectively. Correspondingly, the kinetic equation \eqref{eq:kin_eq_system_long} becomes:
\begin{equation} \label{eq:kin_eq_2d} 
	\partial_t \f^\epsi + v^x \partial_x \f^\epsi + v^y \partial_y \f^\epsi = \frac{1}{\epsi}(\Maxv(\uSys^\epsi) - \f^\epsi),
\end{equation}
in which the particle distribution function $\f^\epsi(x,y,v^x,v^y,t)= (f^\epsi_m(x,y,v^x,v^y,t))_{m=1}^{M} \in \R^{M}$ and the particle velocities $\v=(v^x,v^y) \in V \subset \R^2$, with $v^{x,y}$ the velocity of the particles in the $x$ and $y$ direction, respectively. 

Compared to the one-dimensional setting, the choice of the Maxwellian and the description of the discrete velocity space are considerably more elaborate, and many options have been documented, see, e.g., \cite{Aregba-Driollet2000,Mieussens2000,Bobylev2003}. In the numerical examples in this paper, we choose the orthogonal velocities method, see, e.g., \cite{Aregba-Driollet2000}, which we now detail for the scalar case ($M=1$). In this method, we choose a set of velocities with varying length and direction. Specifically, we first fix a maximal velocity length $\vmax$. We then consider $R$ different velocity lengths:
\[ \rho_r = \dfrac{r}{R}\vmax, \qquad 1\le r \le R, \]
and $4S$ different velocity directions 
\[ \theta_s = \frac{s}{S}\frac{\pi}{2}, \qquad 1\le s \le 4 S, \]
with $R,S \ge 1$. We then obtain $J=4RS$ velocities $\v_j=(v^x_j,v^y_j)$, $1\le j \le J$, by assigning an index $j=\left(r-1\right)4S+s$ to every length-direction pair $(r,s)$, and writing
\begin{equation} \label{eq:ovm_vx_vy}
	v^x_j = \rho_r\cos(\theta_s), \qquad v^y_j = \rho_r\sin(\theta_s), \qquad 1 \le r \le R, \quad 1 \le s \le 4S.
\end{equation}

The Maxwellian function $\Ma_j$ for the $j^{th}$ equation of system \eqref{eq:system_part_diff_eq} is then chosen as:
\begin{equation} \label{eq:maxwellian_2d} 
	\Mj(u) = u +  v^x_j\frac{F^x(u)}{\averageV{(v^x)^2}} + v^y_j\frac{F^y(u)}{\averageV{(v^y)^2}}, 
\end{equation}
It can be shown that for the orthogonal velocities method we have $\averageV{(v^x)^2} = \averageV{(v^y)^2}$.

The generalization to $M > 1$ is straightforward. In \cite{Aregba-Driollet2000} it is proven that, for stability reasons, one should choose $\vmax$ as follows:
\begin{equation} \label{eq:vmax_2d} 
	\vmax^2 \ge \frac{12R^2\left(\norm{\partial_\uSys \F^x}^2 + \norm{\partial_\uSys \F^y}^2\right)}{(R+1)(2R+1)}, 
\end{equation} 
where $\norm{\cdot}$ is the matrix norm associated with the classical 2-norm when $M > 1$.

\begin{rem} \label{rem:RS1} 
	As pointed out in remark \ref{rem:J2}, in the experiments in section \ref{sec:results} we will always choose $J=4$ velocities in 2D. This is accomplished by setting $R=S=1$ in the orthogonal velocities method.
\end{rem}


\section{Projective integration} \label{sec:proj_int}
In this section, we construct a fully explicit, asymptotic-preserving, arbitrary order time integration method for the stiff system \eqref{eq:system_part_diff_eq}. The asymptotic-preserving property \cite{Jin1999} implies that, in the limit when $\epsi$ tends to zero, an $\epsi$-independent time step constraint, of the form $\Dt = O(\dx)$, can be used, \changed{in agreement with the classical hyperbolic CFL constraint for the limiting equation \eqref{eq:kin_eq_limit_long}}. To achieve this, we will use a projective integration method \cite{Gear2003projective,Lafitte2012}, which combines a few small time steps with a naive (\emph{inner}) timestepping method, such as a direct forward Euler discretization, with a much larger (\emph{projective, outer}) time step. The idea is sketched in figure \ref{fig:proj_int}.

\begin{figure}[t]
	\begin{center}
		\figname{sketch_PI}
		\begin{tikzpicture}[node distance = 2cm, auto]
	\tikzstyle{red_dot} = [red,fill=red]
	\draw [axis] (0,0) -- (9,0);
	\draw [axis] (0,0) -- (0,3);
	\node[align=center, right] at (9,0) (time) {time};
	
	\draw [thick] (0.3,0.3) to [out=45,in=145] (8,0.5);
	
	\draw[red_dot] (0.5,2) circle (.5ex);
	\draw [dotted] (0.5,0) -- (0.5,2); \node[below] at (0.5,0) (time) {$t^{n-1}$};
	\draw[red_dot] (0.8,1.25) circle (.5ex); \draw[red_dot] (1.1,0.98) circle (.5ex); \draw[red_dot] (1.4,1.15) circle (.5ex);
	\draw [dashed] (1.1,0.98) -- (4,2.8);
	
	\draw[red_dot] (4,2.8) circle (.5ex);
	\draw [dotted] (4,0) -- (4,2.8); \node[below] at (4,0) (time) {$t^{n}$};
	\draw[red_dot] (4.3,2.1) circle (.5ex); \draw[red_dot] (4.6,1.83) circle (.5ex); \draw[red_dot] (4.9,1.77) circle (.5ex);
	\draw [dashed] (4.6,1.83) -- (7.5,1.4);
	
	\draw[red_dot] (7.5,1.4) circle (.5ex);
	\draw [dotted] (7.5,0) -- (7.5,1.4); \node[below] at (7.5,0) (time) {$t^{n+1}$};

	\path
	([shift={(-5\pgflinewidth,-5\pgflinewidth)}]current bounding box.south west)
	([shift={( 2\pgflinewidth, 2\pgflinewidth)}]current bounding box.north east);
\end{tikzpicture}
	\end{center}
  	\vspace{-0.4cm}\caption{\label{fig:proj_int} Sketch of projective integration. At each time instance ($t^n$), an explicit method is applied over a number of small time steps (red dots) so as to stably integrate the fast modes. As soon as these modes are sufficiently damped the solution is extrapolated using a much larger time step (dashed lines). }
\end{figure}
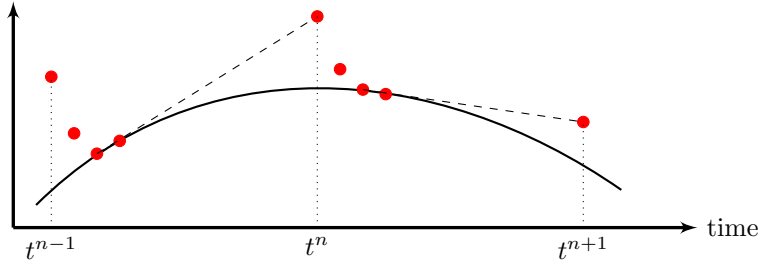

The inner and outer integrator can be selected independently. In section \ref{subsec:inner}, we discuss the inner integrator. Afterwards, in section \ref{subsec:outer}, we consider the outer integrator, before studying their numerical properties in section \ref{sec:stability}.  

\subsection{Inner integrators} \label{subsec:inner}
We intend to integrate \eqref{eq:system_part_diff_eq} on a uniform, constant in time, periodic spatial mesh with spacing $\dx$, consisting of $I$ mesh points $x_i=i\dx$, $1 \le i \le I$, with $I\dx=1$, and a uniform time mesh with time step $\dt$, i.e., $t^k=k\dt$. (Generalization to space-time adaptive grids is of course straightforward.) The numerical solution on this mesh is denoted as $\fSDSys_{i,j}^k$, where we have dropped the dependence on $\epsi$ for conciseness. After discretizing in space, we obtain a semi-discrete system of ordinary differential equations
\begin{equation}\label{eq:semidiscrete} 
	\dot{\fSDSys} = \Dtime(\fSDSys),  \qquad \Dtime(\fSDSys):=- \Dxv(\fSDSys) + \frac{1}{\epsi}\left(\Maxv(\uSys) - \fSDSys\right),
\end{equation}
where $\Dxv(\cdot)$ represents a suitable discretization of the first order spatial derivative $v\partial_x$ (e.g., upwind differences).

As inner integrator, we choose an explicit scheme, for which we will, later on, use the shorthand notation
\begin{equation} \label{eq:timestepper}
	\fSDSys^{k+1} = \Sdt(\fSDSys^{k}),\qquad k = 0, 1, \ldots 
\end{equation}
The forward Euler (FE) method and Runge-Kutta methods immediately come to mind.

\paragraph{Forward Euler (FE).} The simplest time discretization routine is the forward Euler method, 
\begin{equation} \label{eq:FE_scheme} 
	\fSDSys^{k+1} = \fSDSys^k + \dt\Dtime(\fSDSys^k). 
\end{equation}

\paragraph{Higher-order Runge-Kutta methods.} \label{sec:inner_RK2} 
To obtain higher-order accuracy in time in the inner integrator, one could also employ any Runge-Kutta method \cite{Wanner1991,Hairer1993}, such as the second order method,
\begin{align} \label{eq:RK2_scheme} 
	\mathbf{k}_1 &= \Dtime(\fSDSys^k), \\
  	\mathbf{k}_2 &= \Dtime\left(\fSDSys^k + \frac{\dt}{2}\mathbf{k}_1\right), \\
	\fSDSys^{k+1}& = \fSDSys^k + \dt\mathbf{k}_2. 
\end{align}

\added{In section \ref{sec:stability}, we will show that higher-order inner Runge-Kutta methods have spectral properties that make them unsuitable for use in conjunction with projective integration. Hence, in the following sections, we will always use forward Euler as the inner integrator.}

\subsection{Outer integrators} \label{subsec:outer}
In equation \eqref{eq:system_part_diff_eq}, the small parameter $\epsi$ in the relaxation term leads to the classical time step restriction of the form $\dt = O(\epsi)$ for the inner integrator. However, as $\epsi$ goes to $0$, we obtain the limiting equation \eqref{eq:system_part_diff_eq_limit_long} for which a standard finite volume/forward Euler method only needs to satisfy a stability restriction of the form $\Dt \le C\dx$, with $C$ a constant that depends on the specific choice of the scheme and the parameters of the equation.

In \cite{Lafitte2012}, it was proposed to use a projective integration method to accelerate such a brute-force integration; the idea, originating from \cite{Gear2003projective}, is the following. Starting from a computed numerical solution $\fSDSys^n$ at time $t^n=n\Dt$, one first takes $K+1$ \emph{inner} steps of size $\dt$,
\begin{equation} \label{eq:timestepper_in_pi}
	\fSDSys^{n,k+1} = \Sdt(\fSDSys^{n,k}), \qquad 0 \le k \le K,
\end{equation}
in which the superscript pair $(n,k)$ represents the numerical solution at $t^{n,k}=n\Dt +k\dt$. The aim is to obtain a discrete derivative to be used in the \emph{outer} step to compute $\fSDSys^{n+1} = \fSDSys^{n+1,0}$ via extrapolation in time, e.g.,
\begin{equation} \label{eq:PFE_scheme} 
	\fSDSys^{n+1} = \fSDSys^{n,K+1} + (\Dt - (K + 1)\dt)\frac{\fSDSys^{n,K+1} - \fSDSys^{n,K}}{\dt}.
\end{equation}
This method is called projective forward Euler, and it is the simplest instantiation of this class of integration methods \cite{Gear2003projective}.  

\changed{Higher-order projective integration methods can be constructed by replacing each time derivative evaluation $\mathbf{k}_s$ in a classical Runge-Kutta method by $K+1$ steps of an inner integrator as follows (with $\fSDSys^{n,0} = \fSDSys^n$ for consistency) \cite{Lejon2016}}:
\begin{align}
	s = 1 : & 
	\begin{dcases} 
		\fSDSys^{n,k+1} &= \fSDSys^{n,k} + \dt\Dtime(\fSDSys^{n,k}), \qquad 0 \le k \le K \\ 
		\mathbf{k}_1 &= \dfrac{\fSDSys^{n,K+1} - \fSDSys^{n,K}}{\dt}
	\end{dcases} \label{eq:PRK_stage_1} \\
	2 \le s \le S :& 
	\begin{dcases} 
		\fSDSys^{n+c_s,0}_s &= \fSDSys^{n,K+1} + (c_s\Dt-(K+1)\dt) \sum_{l=1}^{s-1}\dfrac{a_{s,l}}{c_s} \mathbf{k}_l, \\
		\fSDSys^{n+c_s,k+1}_s &= \fSDSys^{n+c_s,k}_s + \dt\Dtime(\fSDSys^{n+c_s,k}_s), \qquad 0 \le k \le K \\
		\mathbf{k}_s &= \dfrac{\fSDSys^{n+c_s,K+1}_s - \fSDSys^{n+c_s,K}_s}{\dt}
	\end{dcases} \label{eq:PRK_stage_s} \\
	& \fSDSys^{n+1} = \fSDSys^{n,K+1} + (\Dt-(K+1)\dt)\sum_{s=1}^{S}b_s \mathbf{k}_s.
\end{align}
To ensure consistency, the RK matrix $\mathbf{a}=(a_{s,l})_{s,l=1}^S$, weights $\mathbf{b}=(b_s)_{s=1}^S$, and nodes $\mathbf{c}=(c_s)_{s=1}^S$ satisfy (see, e.g., \cite{Hairer1993}) the conditions $0\le b_s \le 1$ and $0 \le c_s \le 1,$ as well as
\begin{equation} \label{eq:RK_conditions}
	\sum_{s=1}^Sb_s=1, \qquad \sum_{l=1}^{S-1} a_{s,l} =c_s, \quad 1 \le s \le S. 
\end{equation}
(Note that these assumptions imply that $c_1=0$ using the convention that $\sum_{1}^0\cdot=0$.)

In the numerical experiments, we will specifically use projective Runge-Kutta methods of orders 2 and 4, represented by the Butcher tableaux in figure \ref{tab:butcher}.

\begin{figure}[t]
	\begin{center}
		\figname{butcher_tableaux}
		\newcommand{\butcherVerSpacing}{0.6}
\newcommand{\butcherHorSpacing}{0.8}
\newcommand{\butcherMidXstart}{4}
\newcommand{\butcherRightXstart}{9}
\begin{tikzpicture}
	\draw[](-\butcherHorSpacing/2,\butcherVerSpacing/2) -- (1.5*\butcherHorSpacing+0.15,\butcherVerSpacing/2);
	\draw[](\butcherHorSpacing/2,-\butcherVerSpacing/2) -- (\butcherHorSpacing/2,1.5*\butcherVerSpacing);
	
	\node(c) at (0,\butcherVerSpacing) {$\mathbf{c}$};
	\node(a) at (\butcherHorSpacing,\butcherVerSpacing) {$\mathbf{a}$};
	\node(bT) at (\butcherHorSpacing+0.15,0) {$\mathbf{b}^T$};
	
	\draw[] (\butcherMidXstart-\butcherHorSpacing/2,\butcherVerSpacing/2) -- (\butcherMidXstart+2.5*\butcherHorSpacing,\butcherVerSpacing/2);
	\draw[] (\butcherMidXstart+\butcherHorSpacing/2,-\butcherVerSpacing/2) --(\butcherMidXstart+\butcherHorSpacing/2,2.5*\butcherVerSpacing);
	
	\node(c1_sec) at (\butcherMidXstart,\butcherVerSpacing) {$1/2$};
	\node (c2_sec) at (\butcherMidXstart,2*\butcherVerSpacing){$0$};
	\node (a21_sec) at (\butcherMidXstart+\butcherHorSpacing,\butcherVerSpacing) {$1/2$};
	\node (b1_sec) at (\butcherMidXstart+\butcherHorSpacing,0) {$0$};
	\node (b2_sec) at (\butcherMidXstart+2*\butcherHorSpacing,0) {$1$};
	
	\draw[] (\butcherRightXstart-\butcherHorSpacing/2,\butcherVerSpacing/2) -- (\butcherRightXstart+4.5*\butcherHorSpacing,\butcherVerSpacing/2);
	\draw[] (\butcherRightXstart+\butcherHorSpacing/2,-\butcherVerSpacing/2) -- (\butcherRightXstart+\butcherHorSpacing/2,4.5*\butcherVerSpacing);

	\node (c1_fourth) at (\butcherRightXstart,\butcherVerSpacing) {$1$};	
	\node (c2_fourth) at (\butcherRightXstart,2*\butcherVerSpacing) {$1/2$};
	\node (c3_fourth) at (\butcherRightXstart,3*\butcherVerSpacing) {$1/2$};
	\node (c4_fourth) at (\butcherRightXstart,4*\butcherVerSpacing) {$0$};
	\node (a41_fourth) at (\butcherRightXstart+\butcherHorSpacing,\butcherVerSpacing) {$0$};
	\node (a42_fourth) at (\butcherRightXstart+2*\butcherHorSpacing,\butcherVerSpacing) {$0$};
	\node (a43_fourth) at (\butcherRightXstart+3*\butcherHorSpacing,\butcherVerSpacing) {$1$};
	\node (a31_fourth) at (\butcherRightXstart+\butcherHorSpacing,2*\butcherVerSpacing) {$0$};
	\node (a32_fourth) at (\butcherRightXstart+2*\butcherHorSpacing,2*\butcherVerSpacing) {$1/2$};
	\node (a21_fourth) at (\butcherRightXstart+\butcherHorSpacing,3*\butcherVerSpacing) {$1/2$};
	\node(b1_fourth) at (\butcherRightXstart+\butcherHorSpacing,0) {$1/6$};
	\node (b2_fourth) at (\butcherRightXstart+2*\butcherHorSpacing,0) {$1/3$};
	\node (b3_fourth) at (\butcherRightXstart+3*\butcherHorSpacing,0) {$1/3$};
	\node (b4_fourth) at (\butcherRightXstart+4*\butcherHorSpacing,0) {$1/6$};
\end{tikzpicture}
	\end{center}
	\vspace{-0.4cm}\caption {\label{tab:butcher} Butcher tableaux for Runge-Kutta methods. Left: general notation; middle: RK2 method (second order); right: RK4 method (fourth order).}
\end{figure}

\subsection{Stability of projective integration} \label{sec:stab_pi}
We now briefly discuss the main stability properties of projective Runge-Kutta methods as derived in \cite{Lejon2016}. To this end, we introduce the test equation and its corresponding inner integrator,
\begin{equation} \label{eq:test_equation}
	\dot{y} = \lambda y, \qquad y^{k+1} = \tau(\lambda\dt)y^k, \qquad \lambda \in \C.
\end{equation}
As in \cite{Gear2003projective}, we call $\tau(\lambda\dt)$ the \emph{amplification factor} of the inner integrator. (For instance, if the inner integrator is the forward Euler scheme, we have $\tau(\lambda\dt) = 1+\lambda\dt$.) The inner integrator is stable if $\abs{\tau}\le 1$. The question then is for which subset of these values the projective integration method is also stable.  

Considering projective forward Euler, it can easily be seen from \eqref{eq:PFE_scheme} that the projective forward Euler method is stable if
\begin{equation}\label{eq:pfe_stab_cond}
	\abs{\left[\left(\dfrac{\Dt-(K+1)\dt}{\dt} + 1\right)\tau - \dfrac{\Dt-(K+1)\dt}{\dt}\right]\tau^K} \le 1,
\end{equation}
for all eigenvalues $\tau$ of the inner integrator for the kinetic equation \eqref{eq:kin_eq_1d}. The goal is to take a projective time step $\Dt = O(\dx)$, whereas $\dt = O(\epsi)$ necessarily to ensure stability of the inner brute-force forward Euler integration. Since we are interested in the limit $\hydroLimit$ for fixed $\dx$, we look at the limiting stability regions as $\Dt/\dt \to \infty$. In this regime, it is shown in
\cite{Gear2003projective} that the values $\tau$ for which the condition \eqref{eq:pfe_stab_cond} is satisfied lie in the union of two separated disks $\diskpfe_1 \cup \diskpfe_2$ where
\begin{equation}\label{eq:stab_pfe}
	\diskpfe_1=\disk\left(1-\dfrac{\dt}{\Dt},\dfrac{\dt}{\Dt}\right)\text{ and }
	\diskpfe_2=\disk\left(0,\left(\dfrac{\dt}{\Dt}\right)^{1/K}\right),
\end{equation}
and $\disk(c,r)$ denotes the disk with center $(c,0)$ and radius $r$. One then aims at positioning the eigenvalues that correspond to modes that are quickly damped by the time-stepper in $\diskpfe_2$, whereas the eigenvalues in $\diskpfe_1$ should correspond to slowly decaying modes. The projective integration method then allows for accurate integration of the modes in $\diskpfe_1$ while maintaining stability for the modes in $\diskpfe_2$.

\changed{In \cite{Lejon2016}, this analysis is extended to the projective Runge-Kutta case, showing that, in the limit when $\dt/\Dt$ tends to $0$, the stability region of the projective Runge-Kutta method also breaks up into two regions $\diskprk{$q$}_1$ and $\diskprk{$q$}_2$, which moreover satisfy
\[ \diskprk{$q+1$}_1 \supseteq \diskprk{$q$}_1 \supseteq \diskpfe_1 \text{ and } \diskprk{$q+1$}_2 \supseteq \diskprk{$q$}_2 \supseteq \diskpfe_2, \qquad \forall q \in \{1,2,...\}, \]
in which the constant $q$ indicates the order of the specific Runge-Kutta method. This implies that the stability regions of higher-order projective integration methods are contained within those of the lower-order ones. 
The main conclusion is that, whereas the stability regions of higher-order projective Runge-Kutta methods differ from those of projective forward Euler in their precise shape, their qualitative dependence on the parameters of projective integration ($\dt$, $K$ and $\Dt$) is identical, and method parameters that are suitable for projective forward Euler will also be suitable for the higher-order projective Runge-Kutta method.}

\vspace{0.3cm}
\section{Numerical properties} \label{sec:stability}
\added{Now we are ready to use the projective integration method on the relaxation system \eqref{eq:semidiscrete}. The parameters to determine are then the time scale separation parameter $\epsi$ in the relaxation system \eqref{eq:semidiscrete}, as well as the projective integration parameters: the inner time step $\delta t$, the outer time step $\Delta t$ and the number of inner steps $K$. } The projective integration parameters $\dt$, $K$, and $\Dt$ can be determined by imposing that all the eigenvalues of the selected inner integrator scheme fall into the stability region of the \changed{projective integration method}. \moved{While the numerical experiments also deal with systems of nonlinear hyperbolic conservation laws in multiple space dimensions, the analysis is restricted to a one-dimensional, scalar, linear setting.} In section \ref{subsec:inner_integrator_spectrum}, we will calculate the spectrum of the inner integrators constructed for the relaxation system given in \eqref{eq:semidiscrete}, and this in the specific case $J=2$ (see Remark \ref{rem:J2}). \changed{This result is a special case \added{(with an adapted proof)} of the more general result for any (even) number of velocities $J$ that was obtained in \cite{Lejon2016}:} for $J=2$, we are able to derive explicit asymptotic expansions for both the fast and slow eigenvalues in the spectrum. Then, we derive suitable choices for the projective integration parameters \added{in the specific setting of this paper} (section \ref{sec:param}). 
\added{Finally, we elaborate on the consistency of the resulting method for the hyperbolic conservation law~\eqref{eq:cons_law_1d_scalar} in section~\ref{subsec:consist}. Here, we will also see how to properly choose the relaxation parameter $\epsi$.}

\subsection{Spectrum of inner integrators} \label{subsec:inner_integrator_spectrum}
To compute bounds on the spectrum of the inner integrator for the kinetic equation \eqref{eq:kin_eq_system_long} with a linear Maxwellian 
\[ \Mv(u) = u + \dfrac{u}{v}, \]
we first rewrite the semi-discretized kinetic equation \eqref{eq:semidiscrete} in the (spatial) Fourier domain, 
\begin{equation}\label{eq:semidiscrete_fourier}
	\partial_t \fF(\zeta) = \Bcal\;\fF(\zeta), \qquad
	\Bcal = \dfrac{1}{\epsi}(-\epsi\DF + \MF\PF - \eye),
\end{equation}
with $\fF \in \C^2$, $\Bcal$, $\DF \in \Cnm{2}{2}$, $\MF$, $\PF \in \Rnm{2}{2}$, and $\eye$ the identity matrix of dimension $J=2$. In \eqref{eq:semidiscrete_fourier}, the matrix $\DF$ represents the (diagonal) Fourier matrix of the spatial discretization chosen for the convection part, $\PF$ is the Fourier matrix of the averaging of $f$ over the positive and negative velocities,
\begin{equation} \label{eq:P_fourier} 
	\PF:= \frac{1}{2}\begin{pmatrix} 1 & 1 \\ 1 & 1 \end{pmatrix},
\end{equation}
and the matrix $\MF$ represents the Fourier transform of the Maxwellian,
\begin{equation} \label{eq:M_fourier} 
	\MF = \eye +\mathbf{V}^{-1},
\end{equation}
with $\mathbf{V} = \diag{[\vstar,-\vstar]}$ and $\vstar$ the chosen (discrete) velocity component.

Since we are using a symmetric velocity space, we have the following property on the diagonal elements of the matrix $\DF$: $D_1 = \bar{D_2}$. Hence, from now on, we write the diagonal elements of $\DF$ as
\[ D_{1,2} = \alpha \pm \imath\beta, \]
in which $\alpha$ and $\beta$ depend on the spatial discretization, the velocity value $\vstar$ and the Fourier mode $\zeta$\added{, see table \ref{tab:upwind}.}

\begin{table}[t]
	\begin{center}
		\figname{upwind_coef}
		\newcommand{\uWidthZero}{0.8}
\newcommand{\uWidthOne}{3}
\newcommand{\uWidthTwo}{4.5}
\newcommand{\uWidthThree}{4.5}
\newcommand{\uHeightOne}{0.7}
\newcommand{\uHeightTwo}{1.5}
\newcommand{\uHeightThree}{1}
\newcommand{\uTotalHeight}{\uHeightOne+\uHeightTwo+\uHeightThree}
\newcommand{\uTotalWidth}{\uWidthZero+\uWidthOne+\uWidthTwo+\uWidthThree}
\newcommand{\col}{black}
\begin{tikzpicture}
	\draw[color=\col] (0,\uHeightThree) -- (\uTotalWidth,\uHeightThree);
	\draw[color=\col,line width=0.4mm] (0,\uHeightThree+\uHeightTwo) -- (\uTotalWidth,\uHeightThree+\uHeightTwo);
	\draw[color=\col,line width=0.4mm] (\uWidthZero,0) -- (\uWidthZero,\uTotalHeight);
	\draw[color=\col] (\uWidthZero+\uWidthOne,0) -- (\uWidthZero+\uWidthOne,\uTotalHeight);
	\draw[color=\col] (\uWidthZero+\uWidthOne+\uWidthTwo,0) -- (\uWidthZero+\uWidthOne+\uWidthTwo,\uTotalHeight);
	\draw[color=\col] (\uTotalWidth,0) -- (\uTotalWidth,\uTotalHeight);
	
	\node[color=\col] (alfa) at (\uWidthZero/2,\uHeightThree+\uHeightTwo/2) {$\alpha$};
	\node[color=\col] (beta) at (\uWidthZero/2,\uHeightThree/2) {$\beta$};
	\node[color=\col] (u1) at (\uWidthZero+\uWidthOne/2,\uHeightThree+\uHeightTwo+\uHeightOne/2) {upwind 1};
	\node[color=\col] (u1) at (\uWidthZero+\uWidthOne+\uWidthTwo/2,\uHeightThree+\uHeightTwo+\uHeightOne/2) {upwind 2};
	\node[color=\col] (u1) at (\uWidthZero+\uWidthOne+\uWidthTwo+\uWidthThree/2,\uHeightThree+\uHeightTwo+\uHeightOne/2) {upwind 3};
	
	\node[color=\col] (alfa_u1) at (\uWidthZero+\uWidthOne/2,\uHeightThree+\uHeightTwo/2) {$-\vstar\dfrac{2\sin^2\left(\dfrac{\zeta}{2}\right)}{\dx}$};
	\node[color=\col] (alfa_u2) at (\uWidthZero+\uWidthOne+\uWidthTwo/2,\uHeightThree+\uHeightTwo/2) {$-\vstar\dfrac{3-4\cos(\zeta)+\cos(2\zeta)}{2\dx}$};
	\node[color=\col] (alfa_u3) at (\uWidthZero+\uWidthOne+\uWidthTwo+\uWidthThree/2,\uHeightThree+\uHeightTwo/2) {$-\vstar\dfrac{3-4\cos(\zeta)+\cos(2\zeta)}{6\dx}$};
	\node[color=\col] (beta_u1) at (\uWidthZero+\uWidthOne/2,\uHeightThree/2) {$-\vstar\dfrac{\sin(\zeta)}{\dx}$};
	\node[color=\col] (beta_u2) at (\uWidthZero+\uWidthOne+\uWidthTwo/2,\uHeightThree/2) {$-\vstar\dfrac{4\sin(\zeta)-\sin(2\zeta)}{2\dx}$};
	\node[color=\col] (beta_u3) at (\uWidthZero+\uWidthOne+\uWidthTwo+\uWidthThree/2,\uHeightThree/2) {$-\vstar\dfrac{8\sin(\zeta)-\sin(2\zeta)}{6\dx}$};
\end{tikzpicture}
	\end{center}
	\vspace{-0.4cm}\caption {\label{tab:upwind} Dependence of $\alpha$ and $\beta$ on $\vstar$, $\zeta$ and the chosen spatial discretization technique.}
\end{table}

Since in the Fourier domain we are calculating the eigenvalues of $(2 \times 2)$ matrices, it is easy to prove the following theorem.
\begin{thm} \label{thm:spectrum}
	Under the above assumptions, the spectrum of the matrix $\Bcal = \dfrac{1}{\epsi}(\MF\PF - \eye - \epsi\DF)$ contains one slow eigenvalue $\lambda_1$ and one fast eigenvalue $\lambda_2$ which can be expanded as,
	\begin{equation} \label{eq:spectrum_B}
		\begin{aligned}
		\lambda_1 &= -\alpha + \beta^2\left(\frac{1 - \vstar^2}{\vstar^2}\right)\epsi + O(\epsi^3) + \imath\left(-\frac{\beta}{\vstar} + 2\frac{\beta^3}{\vstar}\left(\frac{1-\vstar^2}{\vstar^2}\right)\epsi^2 + O(\epsi^4)\right)& \\
		\lambda_2 &= -\frac{1}{\epsi} - \alpha - \beta^2\left(\frac{1 - \vstar^2}{\vstar^2}\right)\epsi + O(\epsi^3) + \imath\left(\frac{\beta}{\vstar} - 2\frac{\beta^3}{\vstar}\left(\frac{1-\vstar^2}{\vstar^2}\right)\epsi^2 + O(\epsi^4)\right).&
		\end{aligned}
	\end{equation}
	Consequently, \added{when choosing $\vstar = 1$ (and also for $\hydroLimit$ when $\vstar\neq 1$),} the spectrum of $\Bcal$ can be written as,
	\begin{equation}
		\Sp{\Bcal}
			\subset
		\disk\left(-\dfrac{1}{\epsi}, \max_{\zeta}\sqrt{\alpha^2+\frac{\beta^2}{\vstar^2}}\right)
			\cup
		\{\lambda_1\}.
	\end{equation}
\end{thm}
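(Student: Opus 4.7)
The plan is to exploit the fact that, for $J=2$, the matrix $\Bcal$ of \eqref{eq:fourier_semidiscr} is $2\times 2$, so its two eigenvalues can be written in closed form via the characteristic polynomial. Substituting the explicit forms \eqref{eq:P_spectrum}, \eqref{eq:M_spectrum}, $\mathbf{V}=\diag([\vstar,-\vstar])$, and $\D=\diag([\alpha+\imath\beta,\alpha-\imath\beta])$, a short calculation gives
\begin{equation*}
\mathrm{tr}(\epsi\Bcal) = -1 - 2\epsi\alpha, \qquad \det(\epsi\Bcal) = \epsi\alpha + \imath\,\epsi\beta/\vstar + \epsi^2(\alpha^2+\beta^2),
\end{equation*}
so the discriminant of the associated characteristic polynomial simplifies to $1 - 4\imath\epsi\beta/\vstar - 4\epsi^2\beta^2$, and
\begin{equation*}
\mu_\pm = \tfrac{1}{2}\Bigl(-(1+2\epsi\alpha)\pm\sqrt{1 - 4\imath\epsi\beta/\vstar - 4\epsi^2\beta^2}\,\Bigr), \qquad \lambda_{1,2}=\mu_\pm/\epsi.
\end{equation*}
The problem thus reduces to a careful Taylor expansion of the square root.

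The second step is to expand $\sqrt{1+x}$ with $x := -4\imath\epsi\beta/\vstar - 4\epsi^2\beta^2$ using the binomial series. The key observation is a parity structure: the imaginary part of $x$ is of odd order in $\epsi$ while the real part is of even order, and this parity is preserved under multiplication for every power $x^k$. Consequently $\sqrt{1+x}$ admits a series whose real part contains only even powers of $\epsi$ and whose imaginary part contains only odd powers. Combining with the term $-(1+2\epsi\alpha)$ and dividing by $\epsi$ yields the expansions~\eqref{eq:spectrum_B}: the $\epsi^0$ contributions of $-1$ and $\sqrt{1+x}$ cancel in $\mu_+$, producing the bounded $\lambda_1$, while in $\mu_-$ they add to give the dominant $-1/\epsi$ term of $\lambda_2$. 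In both cases the parity argument forces the absence of an $\epsi^2$ coefficient in $\mathrm{Re}(\lambda_{1,2})$ and of an $\epsi^3$ coefficient in $\mathrm{Im}(\lambda_{1,2})$, which is precisely what the remainder orders $O(\epsi^3)$ and $O(\epsi^4)$ record.

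Finally, the disk inclusion follows by inspecting $\lambda_2 + 1/\epsi$: its leading behaviour is $-\alpha + \imath\beta/\vstar$, with modulus $\sqrt{\alpha^2+\beta^2/\vstar^2}$, so taking the maximum over the Fourier modes $\zeta$ (through which $\alpha$ and $\beta$ depend on the spatial discretisation) covers all fast eigenvalues in a disk centred at $-1/\epsi$, while the slow eigenvalues remain collected in $\{\lambda_1\}$. The main technical obstacle in the whole argument is the bookkeeping in the square-root expansion: to justify the sharp remainder orders one must push the binomial series through $x^3$ and $x^4$ and verify that the candidate $\epsi^2$ coefficient in $\mathrm{Re}(\mu_\pm)$ and the candidate $\epsi^3$ coefficient in $\mathrm{Im}(\mu_\pm)$ indeed vanish. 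The parity observation above is the clean way to see that these cancellations persist systematically, rather than being a coincidence of low-order terms.
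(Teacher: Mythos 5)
Your proposal follows essentially the same route as the paper's proof: you form the characteristic polynomial of $\epsi\Bcal$ (your trace, determinant, and discriminant $D = 1 - 4\imath\epsi\beta/\vstar - 4\epsi^2\beta^2$ all match the paper's computation), solve for the two roots in closed form, Taylor-expand $\sqrt{D}$ around $\epsi=0$, and rescale by $1/\epsi$ to separate the slow and fast eigenvalues, concluding with the same asymptotic reading of $\lambda_2+1/\epsi$ for the disk inclusion. The only difference is cosmetic: your parity argument explaining why $\mathrm{Re}\sqrt{D}$ contains only even and $\mathrm{Im}\sqrt{D}$ only odd powers of $\epsi$ is a tidy justification of the remainder orders $O(\epsi^3)$ and $O(\epsi^4)$, which the paper obtains instead by carrying the explicit expansion through the requisite terms.
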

\begin{proof}
Since we discretized in space, the Fourier mode $\zeta$ is a discrete variable, $\zeta_i=i2\pi\dx$, $i=1, ..., I$. The matrix product $\MF\PF$ and matrix $\DF(\zeta)$ in the Fourier domain become:
\begin{equation} \label{eq:MP_and_D}
	\MF\PF = \frac{1}{2}\left( \begin{array}{ccc} 1+1/\vstar & 1+1/\vstar \\ 1-1/\vstar & 1-1/\vstar \end{array} \right), \qquad 
	\DF = \left( \begin{array}{ccc} z^{+} & 0 \\ 0 & z^{-} \end{array} \right), 
\end{equation}
where we used the matrices $\MF$ and $\PF$ given in equations \eqref{eq:M_fourier} and \eqref{eq:P_fourier}. Furthermore, in \eqref{eq:MP_and_D} we adopted the shorthand notation $z^{\pm} = \alpha \pm \imath\beta$, where $\alpha$ and $\beta$ depend on the spatial discretization method, the velocity value $\vstar$ and the Fourier mode $\zeta$.

We start by calculating the spectrum of the matrix $\Acal = \epsi\Bcal$. The eigenvalues $\tilde\lambda$ of $\Acal$ can be obtained as the roots of its characteristic polynomial $\chi_{\Acal}(\tilde\lambda)$, which is given by,
\begin{equation} \label{eq:Acal_char_poly}
	\begin{aligned}
	\chi_{\Acal}(\tilde\lambda) &= \abs{\Acal - \tilde\lambda\eye}& \\
	&= \tilde\lambda^2 + (1 + \epsi(z^{+}+z^{-}))\tilde\lambda + \epsi\left(\frac{1+\vstar}{2\vstar}z^{+}-\frac{1-\vstar}{2\vstar}z^{-}\right) + \epsi^2z^{+}z^{-} = 0.&
	\end{aligned}
\end{equation}
Using the equalities, 
\[ z^{+}+z^{-} = 2\alpha, \qquad z^{+}-z^{-} = \imath2\beta, \qquad z^{+}z^{-} = \alpha^2+\beta^2, \]
the roots of equation \eqref{eq:Acal_char_poly} can be calculated as:
\begin{equation} \label{eq:Acal_eigs} 
	\tilde\lambda_{1,2} = \frac{-1-2\alpha\epsi \pm \sqrt{\discriminant}}{2},
\end{equation}
where the discriminant $\discriminant$ of equation \eqref{eq:Acal_char_poly} can be written as,
\[ \discriminant = 1 - 4\beta^2\epsi^2 - \imath4\frac{\beta}{\vstar}\epsi. \]
Employing a Taylor series expansion for $\sqrt{\discriminant}$ for $\epsi \rightarrow 0$ in equation \eqref{eq:Acal_eigs} leads to:
\begin{equation*} 
	\sqrt{\discriminant} = 1 + 2\beta^2\left(\frac{1 - \vstar^2}{\vstar^2}\right)\epsi^2 + O(\epsi^4) + \imath\left(-2\frac{\beta}{\vstar}\epsi + 4\frac{\beta^3}{\vstar}\left(\frac{1-\vstar^2}{\vstar^2}\right)\epsi^3 + O(\epsi^5)\right).
\end{equation*}
Plugging this expansion into equation \eqref{eq:Acal_eigs} we obtain,
\begin{align*} 
	\tilde\lambda_1 &= \frac{-1-2\alpha\epsi + \sqrt{\discriminant}}{2} \\
	& = -\alpha\epsi + \beta^2\left(\frac{1 - \vstar^2}{\vstar^2}\right)\epsi^2 + O(\epsi^4) + \imath\left(-\frac{\beta}{\vstar}\epsi + 2\frac{\beta^3}{\vstar}\left(\frac{1-\vstar^2}{\vstar^2}\right)\epsi^3 + O(\epsi^5)\right) \\
	\tilde\lambda_2 &= \frac{-1-2\alpha\epsi - \sqrt{\discriminant}}{2} \\
	& = -1 - \alpha\epsi - \beta^2\left(\frac{1 - \vstar^2}{\vstar^2}\right)\epsi^2 + O(\epsi^4) + \imath\left(\frac{\beta}{\vstar}\epsi - 2\frac{\beta^3}{\vstar}\left(\frac{1-\vstar^2}{\vstar^2}\right)\epsi^3 + O(\epsi^5)\right).
\end{align*}
Finally, the dominant and fast eigenvalues, $\lambda_1$ and $\lambda_2$, of the matrix $\Bcal = \dfrac{1}{\epsi}\Acal$ are then given by
\begin{align*}
	\lambda_1 &= -\alpha + \beta^2\left(\frac{1 - \vstar^2}{\vstar^2}\right)\epsi + O(\epsi^3) + \imath\left(-\frac{\beta}{\vstar} + 2\frac{\beta^3}{\vstar}\left(\frac{1-\vstar^2}{\vstar^2}\right)\epsi^2 + O(\epsi^4)\right) \\
	\lambda_2 &= -\frac{1}{\epsi} - \alpha - \beta^2\left(\frac{1 - \vstar^2}{\vstar^2}\right)\epsi + O(\epsi^3) + \imath\left(\frac{\beta}{\vstar} - 2\frac{\beta^3}{\vstar}\left(\frac{1-\vstar^2}{\vstar^2}\right)\epsi^2 + O(\epsi^4)\right).
\end{align*}
\end{proof}

When we write the Fourier transform of the inner forward Euler scheme \eqref{eq:FE_scheme} as 
 \begin{equation} \label{eq:timestepper_fourier}
	 \fF^{k+1} = \SdtF\fF^{k} = (\eye+\dt\Bcal)\fF^k,
\end{equation}
it is clear that the amplifications factors $\tauvec=(\tau_1,\tau_2)$ of the forward Euler scheme, which are the eigenvalues of $\SdtF$, and the eigenvalues $\lambdavec=(\lambda_1,\lambda_2)$ of the matrix $\Bcal$ are related via 
\begin{equation} \label{eq:tau_fe} 
	\tau_j = 1 + \dt \lambda_j, \qquad j \in \{1,2\}.
\end{equation}
Thus, the spectrum of an inner forward Euler time-stepper satisfies
\begin{equation}\label{eq:spec_inner_fe}
	\Sp{\eye + \dt\Bcal}
		\subset
	\disk\left(1-\dfrac{\dt}{\epsi}, \dt\max_{\zeta}\sqrt{\alpha^2+\frac{\beta^2}{\vstar^2}}\right)
		\cup
	\{1+\lambda_1\dt\},
\end{equation}
with $\lambda_1$ given in theorem \ref{thm:spectrum}.

For higher-order Runge-Kutta inner integrators (of order $Q$), we have 
\begin{equation} \label{eq:tau_rk} 
	\tau_j = 1 + \sum_{q=1}^Q\frac{(\dt\lambda_j)^q}{q!}, \qquad j \in \{1,2\}, 
\end{equation} 
and thus 
\begin{equation} \label{eq:spec_inner_rk}
	\Sp{\eye +\sum_{q=1}^Q\dfrac{(\dt \Bcal)^q}{q!}}
		\subset
	\disk\left(1 + \sum_{q=1}^Q\dfrac{(-1)^q}{q!}\left(\dfrac{\dt}{\epsi}\right)^q, C\dt\max_{\zeta}\sqrt{\alpha^2+\frac{\beta^2}{\vstar^2}}\right)
		\cup
	\left\{1 + \sum_{q=1}^Q\frac{(\dt \lambda_1)^q}{q!}\right\},
\end{equation}
in which the constant $C$ depends on the order of the Runge-Kutta method. The spectrum in formula \eqref{eq:spec_inner_rk} can be obtained by transforming the spectrum of $\Bcal$ in \eqref{eq:spectrum_B} by the particular expression of the Runge-Kutta inner integrator amplification factor given by \eqref{eq:tau_rk}.

\subsection{Method parameters} \label{sec:param}
In this section the projective integration method parameters will be determined by ensuring that the spectrum of the inner integrator falls within the stability region of the \changed{projective forward Euler method}. First, we select a suitable inner time step $\dt$ such that the fast modes are quickly damped (section \ref{sec:choose-dt}). Then, we choose the outer time step $\Dt$ commensurate with the slow part of the evolution (section \ref{sec:choose-Dt}). Finally, we fix $K$ to ensure overall stability (section~\ref{sec:choose-K}).
 
\subsubsection{Choice of inner integrator and time step} \label{sec:choose-dt}
Let us first discuss the effect of the choice of the inner integrator.  To this end, we look at the discretization error and the desired stability properties.  
 
Concerning stability, we deduce from the stability properties of the projective integration method (see section \ref{sec:stab_pi}) that it is preferable to center the part of the spectrum of the inner time-stepper corresponding to quickly damped modes around $0$.  Since, for forward Euler, these fast modes are given by \eqref{eq:spec_inner_fe}, we choose, for inner forward Euler, $\dt=\epsi$.
 
For higher-order inner integrators with even order, we immediately see that one cannot center eigenvalues corresponding to the quickly damped modes around $0$. For instance, for a second order Runge-Kutta method we have that 
\begin{equation} \label{eq:max_eig_dist} 
	\min_{\dt\lambda_2} \tau_2 = \min_{\dt\lambda_2}\left(1 + \dt\lambda_2 + \dfrac{1}{2}(\dt\lambda_2)^2\right) = \dfrac{1}{2}, 
\end{equation}
and this value is reached for $\dt=\epsi$. Moreover, as we will show below, the discretization error of the projective integration scheme is dominated by the error of the outer integrator, whereas the discretization error due to the inner integrator is negligible. As a consequence, we conclude that there is no point in using a higher-order time discretization for the inner integrator.
 
\subsubsection{Outer time step} \label{sec:choose-Dt}
Given the inner time step $\dt=\epsi$, we can choose $\Dt$ such that the dominant eigenvalue $\tau_1$ of the inner integrator lies inside the stability region $\diskprk{$q$}$. Let us first look at the projective forward Euler method, with stability regions \eqref{eq:stab_pfe}. We have the following condition on $\Dt$ such that $\tau_{1}$ is contained within $\diskpfe_1$,
\[ \sqrt{\left(\Re(\tau_1) -  \left(1-\frac{\epsi}{\Dt}\right)\right)^2 + \Big(\Im(\tau_1)\Big)^2} \le \frac{\epsi}{\Dt}. \]
Using \eqref{eq:tau_fe} and $\lambda_1$ given in theorem \ref{thm:spectrum}, we deduce a bound of the form
\begin{equation} \label{eq:Deltat_restriction}
	\Dt \le \min_\zeta\left(\frac{2\alpha \vstar^2}{\alpha^2\vstar^2 + \beta^2}\right).
\end{equation}
For the projective forward Euler method and first order upwind\added{, using table \ref{tab:upwind},} we obtain the CFL-like bound:
\[ \Dt \le \frac{\dx}{\vstar}. \]
\added{As indicated in section \ref{subsec:1d}, we need to choose $\vstar \ge \max_u \abs{F'(u)}$ to satisfy the subcharacteristic condition~\eqref{eq:subcharacteristic_condition}. When choosing $\vstar = \max_u \abs{F'(u)}$,} we obtain exactly the stability condition for the forward Euler method applied to the original hyperbolic conservation law \eqref{eq:cons_law_1d_scalar}. 

When we combine a higher-order upwind method with forward Euler time-stepping, the time step restriction becomes much more severe, in exactly the same way as would be the case with a direct higher-order upwind/forward Euler discretization of equation \eqref{eq:cons_law_1d_scalar}. \added{Using table \ref{tab:upwind} and equation \eqref{eq:Deltat_restriction}, we find:}
\begin{align*}
	\Dt &\le \frac{2\pi^2\vstar(3-10\pi^2\dx^2)}{3}\dx^3, &\text{(second order upwind)} \\
	\Dt &\le \frac{2\pi^2\vstar(3-2\pi^2\dx^2)}{9}\dx^3 &\text{(third order upwind)}.
\end{align*}
This is because, for higher-order upwind methods, the dominant eigenvalues do not lie on a circle anymore. In fact, they belong to a region that is much steeper close to 1. Since the dominant stability region of projective forward Euler is always a circle, we need to choose its radius sufficiently large such that even the steepest eigenvalues fall into this circular stability region.
When we use a higher-order projective Runge-Kutta method, we should determine the bound on $\Dt$ using its proper dominant stability region (see, e.g., \cite{Lejon2016}). For instance, in case of the second order projective Runge-Kutta method, we obtain the following CFL-like bound,
\[ \Dt \le \nu\dx, \]
where $\nu$ is the CFL-number which depends on the coefficients $\alpha$ and $\beta$. Notice that in both cases $\Dt$ never depends on $\epsi$.
 
\subsubsection{The number $K$ of inner time steps} \label{sec:choose-K}
The only parameter that remains to be chosen is the number $K$ of inner steps such that all fast eigenvalues $\tau_2$ (see equation \eqref{eq:tau_fe} or \eqref{eq:tau_rk}) corresponding to quickly damped modes are contained within the stability region $\diskpfe_2$ (see equation \eqref{eq:stab_pfe}).  Given $\dt$, $\Dt$ and the radius of the fast eigenvalues zone introduced in \eqref{eq:spec_inner_fe} or \eqref{eq:spec_inner_rk}, we obtain a condition for $K$ of the form,
\[ c\epsi \le \left(\dfrac{\epsi}{\Dt}\right)^{1/K},  \qquad c = \max_{\zeta}\sqrt{\alpha^2+\frac{\beta^2}{\vstar^2}}. \] 
From this we can obtain the bound
\[ K \ge \frac{1}{1 + \dfrac{\log(c)}{\log(\epsi)}} + \frac{\log(\Dt)}{\log\left(\dfrac{1}{\epsi c}\right)}. \]
Therefore, for $\epsi$ small, a safe choice is taking $K \ge 2$, uniformly in $\epsi$. The computational cost of the method is then independent of $\epsi$.

\changed{Note that, when choosing a higher-order inner Runge-Kutta method, the fast eigenvalues $\tau_2$ cannot be centered around $0$, see equation \eqref{eq:max_eig_dist}. } In that case, we need to impose that the radius of the stability region $\diskpfe_2$ (see equation \eqref{eq:stab_pfe}) is sufficiently large to encompass the fast eigenvalues zone of the inner Runge-Kutta method, yielding
\[ \tilde c \le \left(\dfrac{\dt}{\Dt}\right)^{1/K}, \qquad \tilde c = 1 + \sum_{q=1}^Q\dfrac{(-1)^q}{q!}\left(\dfrac{\dt}{\epsi}\right)^q + C\dt\max_{\zeta}\sqrt{\alpha^2+\frac{\beta^2}{\vstar^2}}. \] 
Taking the logarithm of both sides and rearranging terms ultimately leads to
\[ K \ge \frac{\log\left(\dfrac{1}{\dt}\right)}{\log\left(\dfrac{1}{\tilde c}\right)} + \frac{\log(\Dt)}{\log\left(\dfrac{1}{\tilde c}\right)}, \]
which results in a condition of the form $K\ge\log(1/\epsi)$ since $\dt = O(\epsi)$. Hence, using an inner integrator of \changed{higher} order destroys the asymptotic-preserving nature of the projective integration method.

\subsection{Consistency analysis} \label{subsec:consist}
\added{We now examine the consistency behavior of the proposed method, that is, the behavior of the local truncation error. The exposition in this section is based on the consistency derivation in \cite{Lafitte2012,Lejon2016}, which becomes simpler in the hyperbolic case. We introduce the following notation:
\begin{enumerate}
	\item $\fSDSys^N$ and $\fExact^N$ denote the numerical and exact solution at time $t^N = N\Dt$, respectively. Both are vectors of length $I \times J$ obtained by collecting $f_{i,j}^N$ and $f(x_i,v_j,t^N)$, $\forall\,1 \le i \le I,\, 1 \le j \le J$;
	\item $\uSDSys^N = \averageV{\fSDSys^N}$ and $\uExact^N = \averageV{\fExact^N}$ represent the numerical and exact conserved quantity at time $t^N$ obtained by averaging over velocity space. Both are vectors of length $I$.
\end{enumerate}
Since the goal of the simulations is to obtain $\uSys(x,t) = \averageV{\f^\epsi(x,v,t)}$, we define the local truncation error at time $t=t^{N+1}$ as:
\begin{equation} \label{eq:local_truncation_error}
	E^{N+1} = \frac{\uExact^{N+1} - \uSDSys^{N+1}}{\Dt}.
\end{equation}

We focus here on the consistency analysis for PFE. The analysis can straightforwardly be extended to higher order PRK methods, see \cite{Lejon2016}, since the stages $\mathbf{k}_s$ in the projective Runge-Kutta methods are computed as finite difference approximations of the time derivative in which the function values are obtained by the PFE method, see equations \eqref{eq:PRK_stage_1}-\eqref{eq:PRK_stage_s}. 
By averaging equation \eqref{eq:PFE_scheme} over velocity space and substituting the result in equation \eqref{eq:local_truncation_error}, we find:
\begin{equation} \label{eq:local_truncation_error_PFE}
	E^{N+1} = \frac{\uExact^{N+1} - \uSDSys^{N,K+1}}{\Dt} - \left(\frac{\Dt - (K+1)\dt}{\Dt}\right)\frac{\uSDSys^{N,K+1} - \uSDSys^{N,K}}{\dt}.
\end{equation}
Equation \eqref{eq:local_truncation_error_PFE} shows that the local truncation error of the PFE method depends on its inner integrator. Therefore, we also introduce the local truncation error of the inner integrator, defined as:
\begin{equation} \label{eq:local_truncation_error_inner}
	e_f^{N,k+1} = \frac{\fExact^{N,k+1} - \fSDSys^{N,k+1}}{\dt}.
\end{equation}
We now calculate an estimate for $e_f^{N,k+1}$. We rewrite both quantities $\fExact^{N,k+1}$ and $\fSDSys^{N,k+1}$ in terms of the the solutions at time $t^{N,k}$. The numerical solution $\fSDSys^{N,k+1}$ is reformulated by recalling that we choose the FE scheme \eqref{eq:FE_scheme} as inner integrator (see section \ref{sec:choose-dt}), and the fact that we require $\dt = \epsi$ for stability (see section \ref{sec:choose-dt}), giving:
\begin{align} \label{eq:local_truncation_error_inner_first_term}
	\fSDSys^{N,k+1} &= \Sdt(\fSDSys^{N,k}) = \fSDSys^{N,k} +  \epsi\left(-\Dxv(\fSDSys^{N,k}) + \frac{1}{\epsi}\left(\Mv(\uSys^{N,k}) - \fSDSys^{N,k}\right)\right) \notag \\
	&= -\epsi\Dxv(\fSDSys^{N,k}) + \Mv(\uSys^{N,k}).
\end{align}
The exact solution $\fExact^{N,k+1}$ is computed by applying a Taylor series expansion of $\fExact^{N,k+1}$ around time $t^{N,k}$, yielding:
\begin{align} \label{eq:local_truncation_error_inner_second_term}
	\fExact^{N,k+1} &= \fExact^{N,k} + \epsi\,\partial_t \fExact^{N,k} + O(\epsi^2) \notag \\
	&= \fExact^{N,k} + \epsi\left(-v\partial_x \fExact^{N,k} + \frac{1}{\epsi}\left(\Mv(\uExact^{N,k}) - \fExact^{N,k}\right)\right) + O(\epsi^2) \notag \\
	&= \Sdt(\fExact^{N,k}) + \epsi\left(\Dxv(\fExact^{N,k}) - v\partial_x \fExact^{N,k}\right) + O(\epsi^2),
\end{align}
where the last equality is obtained by adding and subtracting $\Dxv(\fExact^{N,k})$ and subsequently using the forward Euler timestepper expression \eqref{eq:FE_scheme}. Substituting equations \eqref{eq:local_truncation_error_inner_first_term} and \eqref{eq:local_truncation_error_inner_second_term} into \eqref{eq:local_truncation_error_inner}, we find:
\begin{equation} \label{eq:local_truncation_error_inner_recursive}
	e_f^{N,k+1} = \Sdt(e_f^{N,k}) + \left(\Dxv(\fExact^{N,k}) - v\partial_x \fExact^{N,k}\right) + O(\epsi).
\end{equation}
Working out the recursion in equation \eqref{eq:local_truncation_error_inner_recursive} gives:
\begin{equation} \label{eq:local_truncation_error_inner_norecursion}
	e_f^{N,K+1} = \sum_{k=0}^{K}\Sdt^k\left(\Dxv(\fExact^{N,K-k}) - v\partial_x \fExact^{N,K-k}\right) + O((K+1)\epsi).
\end{equation}
Since the difference between brackets in equation \eqref{eq:local_truncation_error_inner_norecursion} precisely corresponds to the spatial discretization error, we find the following estimate for the inner integrator local truncation error:
\begin{equation} \label{eq:local_truncation_error_inner_final}
	e_f^{N,K+1} = O((K+1)\dx^p) + O((K+1)\epsi),
\end{equation}
where $p$ denotes the order of accuracy of the spatial discretization method.
Then, using equations \eqref{eq:local_truncation_error_inner} and \eqref{eq:local_truncation_error_inner_final}, the local truncation error \eqref{eq:local_truncation_error_PFE} of the PFE method is computed as:
\begin{align} \label{eq:local_truncation_error_PFE_calculate}
	E^{N+1} &= \frac{\uExact^{N+1} - \uExact^{N,K+1} + \epsi\averageV{e_f^{N,K+1}}}{\Dt} - \left(\frac{\Dt - (K+1)\epsi}{\Dt}\right)\frac{\uExact^{N,K+1} - \uExact^{N,K} - \epsi\averageV{e_f^{N,K+1} - e_f^{N,K}}}{\epsi} \notag \\
	&= O(\Dt) + O\left(\frac{\epsi^2}{\Dt}\right) + \frac{\epsi}{\Dt}\averageV{e_f^{N,K+1}} + \left(\frac{\Dt - (K+1)\epsi}{\Dt}\right)\Big(O(\dx^p) + O((K+1)\epsi)\Big) \notag \\
	&= O(\Dt) + O(\dx^p) + O((K+1)\epsi) + O\left(\frac{\epsi^2}{\Dt}\right) + \frac{\epsi}{\Dt}O(\dx^p),
\end{align}
where we used Taylor expansions of all quantities $\uExact$ around $t^N$ in the first equality.

Ultimately, we obtain the following expression for the local truncation error for PFE and FE as inner integrator:
\begin{equation} \label{eq:local_truncation_error_PFE_final}
	E^{N+1} = O(\Dt) + O(\dx^p) + O((K+1)\epsi) + \epsi O\left(\frac{\dx^p + \epsi}{\Dt}\right),
\end{equation}
Under the above assumptions, it can then easily be shown, following the proof of~\cite[Theorem~5.1]{Lejon2016}, that a projective Runge-Kutta method of order $q$ has the following discretization error:
\begin{equation} \label{eq:local_trunc_error} 
	E^N = O\left(\Dt^q\right) + O\left(\dx^p\right) + O\left((K+1)\epsi\right) + \epsi O\left(\frac{\dx^p + \epsi}{\Dt}\right).
\end{equation}}
The first term in \eqref{eq:local_trunc_error} is due to the time discretization error made in the outer Runge-Kutta integrator, whereas the next two terms are the space and time discretization error respectively due to the inner integrator. The last term results from the time derivative operator approximation. We remark that, in the limit of $\Dt$ going to $0$, for fixed $\epsi$, this last term would result in divergence. However, since the goal is to create an asymptotic-preserving scheme, valid for fixed $\Dt$ (independent of $\epsi$), while $\epsi$ tends to $0$, this is not an issue: the last term then becomes of $O(\epsi)$, and hence negligible.

\added{Finally, we mention that the choice of $\epsi$ in this work is determined by the finite difference approximation to the time derivative appearing in the projective Runge-Kutta formulations which have the following form:}
\begin{equation} \label{eq:FD_time_derivative}
	\added{\frac{f^{n,K+1} - f^{n,K}}{\epsi}.}
\end{equation}
\added{As a consequence, the numerical error of this approximation is $O(\epsi)$ and is bounded below by $O(\sqrt{\epsi_{mach}})$ with $\epsi_{mach} \approx 10^{-16}$ being the machine precision. For that reason, we choose $\epsi=10^{-8}$.}


\section{Applications} \label{sec:results}
Let us now illustrate the relaxation method with projective integration on a number of example systems. We first examine the one-dimensional case. In section \ref{subsec:order}, we consider the linear advection equation, and demonstrate the spatial and temporal order of the methods.  Subsequently, we investigate nonlinear conservation laws, Burgers' equation in section \ref{subsec:burgers} and the Euler equations (Sod's shock test) in section \ref{subsec:sod}. Afterwards, we consider linear advection, the dam-break problem and the Euler equations in the two-dimensional case (sections \ref{subsec:lin_2d}--\ref{subsec:euler_2d}). 

\subsection{Linear advection in 1D} \label{subsec:order}
Let us first illustrate the order of the relaxation method with projective integration, both in space and time. To this end, we consider the linear advection equation, i.e., equation \eqref{eq:cons_law_1d_scalar} with the linear flux function \eqref{eq:flux_linear_advection_1d_scalar}, 
\begin{equation} \label{eq:linear_advection_1d_scalar}
	\partial_t u + \partial_x (a\cdot u)= 0,
\end{equation}
in which the macroscopic unknown function $u(x,t)$ denotes the density of particles. We compute the solution for $t\in[0,T]$ and $x\in[0,1]$, using $a = 1$. We impose periodic boundary conditions and choose a smooth initial condition:
\begin{equation} \label{eq:linear_advection_1d_init}
	u(x,0) = \exp(-100(x-0.5)^2).
\end{equation}
\changed{We compute the global error $E^N$ at time $t^N = T$ which is defined as $E^N=\norm{\boldsymbol{E}^N}$, with $\boldsymbol{E}^N=\set{E_i^N}{i}$, and $E_i^N$ denoting the global space-time discretization error at time $t=t^N$ and grid location $x=x_i$ given by:
\begin{equation} \label{eq:global_error}
	E^N_i = \abs{\uSys_i^N-\uSys(x_i,t^N)},
\end{equation}
(For the linear advection equation, the exact solution is known analytically.) Here, we always choose the $1$-norm to calculate the global error $E^N$.}

For the relaxation method, we use the kinetic equation \eqref{eq:kin_eq_1d}, in which we discretize the velocity space using $J=2$ velocities. Taking into account the subcharacteristic condition \eqref{eq:subcharacteristic_condition_artificial}, the velocities are chosen as: $v_1 = -\abs{a}$ and $v_2 = \abs{a}$. We point out that, in this purely academic first test case, we recover the Jin-Xin relaxation system (see also Remark 2.1), and additionally, the distribution corresponding to $v_1=-\abs{a}$ vanishes in the limit of $\epsi$ tending to zero. 
As the Maxwellian, we choose \eqref{eq:maxwellian_artificial_1d}, with $F(u) = a\cdot u$. The inner integrator is a space-time discretization of equation \eqref{eq:kin_eq_1d}, in which we choose the standard upwind spatial discretizations of order $1$, $2$ and $3$ with grid spacing $\dx$ (that will vary throughout the experiments), combined with a forward Euler time discretization with $\dt=\epsi$ and $\epsi=10^{-8}$. The projective integration method uses $K=2$ inner steps, and an outer time step of size $\Dt$ (that will also vary). 

\paragraph{Numerical spatial order (figures \ref{fig:order_test_PFE_PRK2} and \ref{fig:order_test_PRK4})}
To illustrate the spatial order of accuracy, we calculate the error on time $T=0.02$ and vary the grid spacing $\dx$ as 
\begin{equation} \label{eq:order_test_dx} 
	\dx = [0.04; 0.02; 0.01; 0.005; 0.002; 0.001; 0.0005; 0.0002; 0.0001],
\end{equation}
and correspondingly choose the outer time step $\Dt$ as
\begin{equation} \label{eq:order_test_Dt}
	\Dt = O\left(\dx^{p/q}\right) = C_p\dx^{p/q},
\end{equation}
\added{such that the temporal discretization error and the spatial discretization error display the same asymptotic behaviour as $\Delta x$ tends to zero, see equation \eqref{eq:local_trunc_error}.} 
The constants $C_p$ should be chosen such that the projective integration method remains stable for all choices of $\dx$. Figure \ref{fig:order_test_PFE_PRK2} shows the error as a function of $\dx$. In the leftmost figure, time integration is done using the projective forward Euler method (PFE), for which the time order $q=1$. The constants in \eqref{eq:order_test_Dt} are then chosen as $C_1 = 0.5$, $C_2 = 20$ and $C_3 = 100$. We clearly observe the expected spatial order. This is confirmed by fitting a least squares line through the calculated error points. The slopes of these lines correspond to the numerical order which in this case were found to be 0.98, 1.94 and 2.99. These indeed lie sufficiently close to the expected spatial order. In the middle figure, the experiment is repeated using a second order projective Runge-Kutta method (PRK2, $q = 2$). For spatial orders $p \in \{1,2\}$, we choose $\Dt = 0.5\dx$. In that case, the first term in expression \eqref{eq:local_trunc_error} will be dominant and the order in space can be observed. For $p = 3$ we put $\Dt = C_3\dx^{3/2}$ and choose $C_3 = 4$. On these plots, we observe that, for the third order upwind discretization, the error curves start to level off for small values of $\dx$. This is due to the fact that the contribution of the spatial discretization error in \eqref{eq:local_trunc_error} becomes negligible, and the $O(\epsi)$ term becomes dominant. \changed{As indicated in section \ref{subsec:consist}, this term results from the time derivative approximation in the projective step by a finite difference expression, see equation \eqref{eq:FD_time_derivative}.} When calculating the slopes of the least squares fit we now obtain 0.99, 1.99 and 3.07 which correspond to the expected spatial orders.

Finally, we repeat the experiment using a second order Runge-Kutta method as the inner integrator (see figure \ref{fig:order_test_PFE_PRK2}, right).  We note, in agreement with the observations in section \ref{sec:param} that $K$ can now no longer be chosen independently of $\epsi$.  Here, we choose $K=21$ for $p\in\{1,2\}$ and $K=22$ for $p=3$. The numerical orders are 1.00, 1.99 and 3.04. Moreover, we again observe that the error levels off, since the remaining error is due to the finite difference approximation of the time derivative, and not due to the time discretization error of the inner integrator. Note that the error curve now levels off to a value which is about $10$ times higher than in the forward Euler case, due to a less efficient damping of the fast eigenvalues, as can be seen in \eqref{eq:spec_inner_rk}. This observation is supported by looking at equation \eqref{eq:local_trunc_error} in which the value of $K$ needs to be taken $10$ times higher for RK2 as inner integrator so as to guarantee a stable functioning of the method. From these findings we conclude that it is not useful to select a higher-order inner integrator within the projective integration framework. Therefore, in what follows we will always select FE as inner integrator.

Next, we repeat this experiment using a fourth order projective Runge-Kutta method (PRK4). We choose $\Dt = C\dx$ for each $p \in \{1, 2, 3\}$ and put $C$ equal to $0.4$. The result is depicted on the left hand side plot of figure \ref{fig:order_test_PRK4}, where we again see the expected behavior (numerical orders: 0.99, 1.99 and 2.97).  To avoid unphysical oscillations associated with higher-order upwind schemes, we also performed the same experiments using an essentially non-oscillatory (ENO) spatial discretization \cite{eno_weno}, which uses an adaptive stencil thus trying to avoid stencils with large variations in the solution values. The order test of the PRK4 scheme with ENO is shown on the right hand side plot of figure \ref{fig:order_test_PRK4}. The calculated numerical orders were 0.99, 1.86 and 2.86 which are in agreement with the expected spatial orders of the ENO scheme.

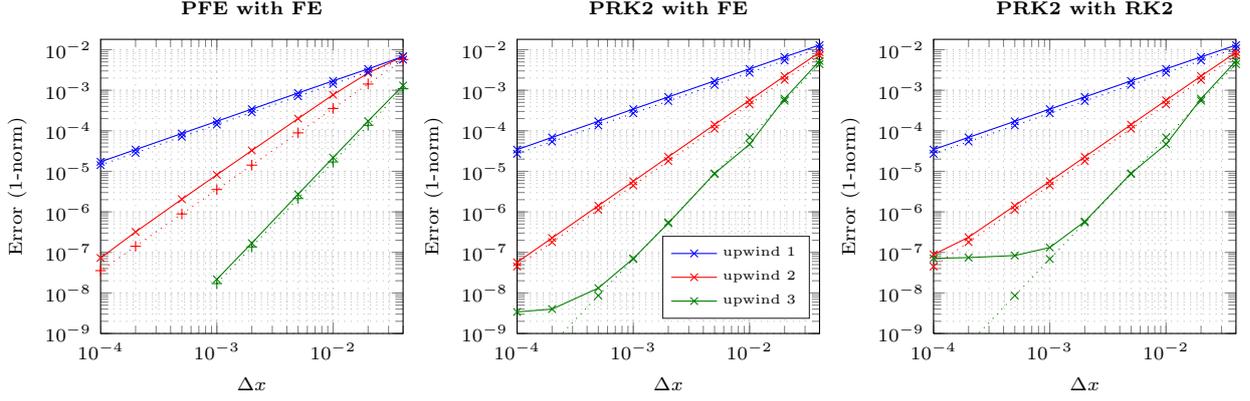
\begin{figure}
	\begin{center}
		\figname{ordertest_space_PFE_PRK2_FE_RK2}
%
%
\definecolor{mycolor1}{rgb}{0.00000,0.49804,0.00000}%
\newcommand{\figWidth}{0.25\textwidth} 
\newcommand{\figHeight}{3.9cm} 
\newcommand{\figSpacingRight}{1.5cm} 
\begin{tikzpicture}

\begin{axis}[%
width=0,
height=0,
scale only axis,
clip=false,
xmin=0,
xmax=1,
ymin=0,
ymax=1,
hide axis,
]
\node[align=center, text=black]
at (3*\figWidth/2+\figSpacingRight,\figHeight+0.9cm) {Order test (space) for linear advection};
\end{axis}

\begin{axis}[%
width=\figWidth,
height=\figHeight,
at={(0,0.0)},
scale only axis,
xmode=log,
xmin=0.0001,
xmax=0.04,
xminorticks=true,
xlabel={$\dx$},
xmajorgrids,
xminorgrids,
ymode=log,
ymin=1e-09,
ymax=0.018,
yminorticks=true,
ytick = {1e-9, 1e-8, 1e-7, 1e-6, 1e-5, 1e-4, 1e-3, 1e-2, 1e-1},
ylabel={Error (1-norm)},
ymajorgrids,
yminorgrids,
grid style={dotted},
axis background/.style={fill=white},
title style={font=\scriptsize\bfseries},
title={PFE with FE}
]
\addplot [color=red,solid,mark=x,mark options={solid},forget plot]
  table[]{tikz/data/ordertest_space_PFE_PRK2-13.tsv};
\addplot [color=mycolor1,solid,mark=x,mark options={solid},forget plot]
  table[]{tikz/data/ordertest_space_PFE_PRK2-14.tsv};
\addplot [color=blue,solid,mark=x,mark options={solid},forget plot]
  table[]{tikz/data/ordertest_space_PFE_PRK2-15.tsv};
\addplot [color=blue,dotted,mark=x,mark options={solid},forget plot]
  table[]{tikz/data/ordertest_space_PFE_PRK2-16.tsv};
\addplot [color=red,dotted,mark=+,mark options={solid},forget plot]
  table[]{tikz/data/ordertest_space_PFE_PRK2-17.tsv};
\addplot [color=mycolor1,dotted,mark=+,mark options={solid},forget plot]
  table[]{tikz/data/ordertest_space_PFE_PRK2-18.tsv};
\end{axis}

\begin{axis}[%
width=\figWidth,
height=\figHeight,
at={(\figWidth+\figSpacingRight,0)},
scale only axis,
xmode=log,
xmin=0.0001,
xmax=0.04,
xminorticks=true,
xlabel={$\dx$},
xmajorgrids,
xminorgrids,
ymode=log,
ymin=1e-09,
ymax=0.018,
yminorticks=true,
ytick = {1e-9, 1e-8, 1e-7, 1e-6, 1e-5, 1e-4, 1e-3, 1e-2, 1e-1},
ylabel={Error (1-norm)},
ymajorgrids,
yminorgrids,
grid style={dotted},
axis background/.style={fill=white},
title style={font=\scriptsize\bfseries},
title={PRK2 with FE},
legend style={at={(0.48,0.05)},anchor=south west,legend cell align=left,align=left,draw=white!15!black}
]
\addplot [color=blue,solid,mark=x,mark options={solid}]
  table[]{tikz/data/ordertest_space_PFE_PRK2-3.tsv};
\addlegendentry{upwind 1};
\addplot [color=blue,dotted,mark=x,mark options={solid},forget plot]
  table[]{tikz/data/ordertest_space_PFE_PRK2-4.tsv};
\addplot [color=red,solid,mark=x,mark options={solid}]
  table[]{tikz/data/ordertest_space_PFE_PRK2-5.tsv};
\addlegendentry{upwind 2};
\addplot [color=red,dotted,mark=x,mark options={solid},forget plot]
  table[]{tikz/data/ordertest_space_PFE_PRK2-6.tsv};
\addplot [color=mycolor1,solid,mark=x,mark options={solid}]
  table[]{tikz/data/ordertest_space_PFE_PRK2-1.tsv};
\addlegendentry{upwind 3};
\addplot [color=mycolor1,dotted,mark=x,mark options={solid},forget plot]
  table[]{tikz/data/ordertest_space_PFE_PRK2-2.tsv};
\end{axis}

\begin{axis}[%
width=\figWidth,
height=\figHeight,
at={(2*\figWidth+2*\figSpacingRight,0)},
scale only axis,
xmode=log,
xmin=0.0001,
xmax=0.04,
xminorticks=true,
xlabel={$\dx$},
xmajorgrids,
xminorgrids,
ymode=log,
ymin=1e-09,
ymax=0.018,
yminorticks=true,
ytick = {1e-9, 1e-8, 1e-7, 1e-6, 1e-5, 1e-4, 1e-3, 1e-2, 1e-1},
ylabel={Error (1-norm)},
ymajorgrids,
yminorgrids,
grid style={dotted},
axis background/.style={fill=white},
title style={font=\scriptsize\bfseries},
title={PRK2 with RK2}
]
\addplot [color=blue,solid,mark=x,mark options={solid},forget plot]
  table[]{tikz/data/ordertest_space_PFE_PRK2-7.tsv};
\addplot [color=blue,dotted,mark=x,mark options={solid},forget plot]
  table[]{tikz/data/ordertest_space_PFE_PRK2-8.tsv};
\addplot [color=red,solid,mark=x,mark options={solid},forget plot]
  table[]{tikz/data/ordertest_space_PFE_PRK2-9.tsv};
\addplot [color=red,dotted,mark=x,mark options={solid},forget plot]
  table[]{tikz/data/ordertest_space_PFE_PRK2-10.tsv};
\addplot [color=mycolor1,solid,mark=x,mark options={solid},forget plot]
  table[]{tikz/data/ordertest_space_PFE_PRK2-11.tsv};
\addplot [color=mycolor1,dotted,mark=x,mark options={solid},forget plot]
  table[]{tikz/data/ordertest_space_PFE_PRK2-12.tsv};
\end{axis}
\end{tikzpicture}%
	\end{center}
	\vspace{-0.4cm}\caption{\label{fig:order_test_PFE_PRK2} Spatial order test for PFE with FE as inner integrator (left) and PRK2 with FE (middle) and RK2 (right) as inner integrators and three different spatial orders. The error is computed using the 1-norm. On each plot, the solid lines represent the calculated error whereas the dotted line shows the expected error. }
\end{figure}

\begin{figure}
	\begin{center}
		\figname{ordertest_space_PRK4_FE}
%
%
\definecolor{mycolor1}{rgb}{0.00000,0.49804,0.00000}%
\newcommand{\figWidth}{0.25\textwidth} 
\newcommand{\figHeight}{4cm} 
\newcommand{\figSpacingRight}{2cm} 
\begin{tikzpicture}

\begin{axis}[%
width=0,
height=0,
scale only axis,
clip=false,
xmin=0,
xmax=1,
ymin=0,
ymax=1,
hide axis
]
\node[align=center, text=black] at (\figWidth+\figSpacingRight/2,\figHeight+0.9cm) {Order test (space) for linear advection: PRK4 + FE};
\end{axis}

\begin{axis}[%
width=\figWidth,
height=\figHeight,
at={(0,0.0)},
scale only axis,
xmode=log,
xmin=0.0001,
xmax=0.04,
xminorticks=true,
xlabel={$\dx$},
xmajorgrids,
xminorgrids,
ymode=log,
ymin=1e-09,
ymax=0.018,
yminorticks=true,
ytick = {1e-9, 1e-8, 1e-7, 1e-6, 1e-5, 1e-4, 1e-3, 1e-2, 1e-1},
ylabel={Error (1-norm)},
ymajorgrids,
yminorgrids,
grid style={dotted},
axis background/.style={fill=white},
title style={font=\scriptsize\bfseries},
title={upwind scheme},
legend style={at={(0.52,0.05)},anchor=south west,legend cell align=left,align=left,draw=white!15!black}
]
\addplot [color=blue,solid,mark=x,mark options={solid}]
  table[]{tikz/data/ordertest_space_PRK4-1.tsv};
\addlegendentry{upwind 1};
\addplot [color=blue,dotted,mark=x,mark options={solid},forget plot]
  table[]{tikz/data/ordertest_space_PRK4-2.tsv};
\addplot [color=red,solid,mark=x,mark options={solid}]
  table[]{tikz/data/ordertest_space_PRK4-3.tsv};
\addlegendentry{upwind 2};
\addplot [color=red,dotted,mark=x,mark options={solid},forget plot]
  table[]{tikz/data/ordertest_space_PRK4-4.tsv};
\addplot [color=mycolor1,solid,mark=x,mark options={solid}]
  table[]{tikz/data/ordertest_space_PRK4-5.tsv};
\addlegendentry{upwind 3};
\addplot [color=mycolor1,dotted,mark=x,mark options={solid},forget plot]
  table[]{tikz/data/ordertest_space_PRK4-6.tsv};
\end{axis}

\begin{axis}[%
width=\figWidth,
height=\figHeight,
at={(\figWidth+\figSpacingRight,0)},
scale only axis,
xmode=log,
xmin=0.0001,
xmax=0.04,
xminorticks=true,
xlabel={$\dx$},
xmajorgrids,
xminorgrids,
ymode=log,
ymin=1e-09,
ymax=0.018,
yminorticks=true,
ytick = {1e-9, 1e-8, 1e-7, 1e-6, 1e-5, 1e-4, 1e-3, 1e-2, 1e-1},
ylabel={Error (1-norm)},
ymajorgrids,
yminorgrids,
grid style={dotted},
axis background/.style={fill=white},
title style={font=\scriptsize\bfseries},
title={ENO scheme},
legend style={at={(0.55,0.05)},anchor=south west,legend cell align=left,align=left,draw=black}
]
\addplot [color=blue,solid,mark=x,mark options={solid}]
  table[]{tikz/data/ordertest_space_PRK4-7.tsv};
\addlegendentry{ENO 1};
\addplot [color=blue,dotted,mark=x,mark options={solid},forget plot]
  table[]{tikz/data/ordertest_space_PRK4-8.tsv};
\addplot [color=red,solid,mark=x,mark options={solid}]
  table[]{tikz/data/ordertest_space_PRK4-9.tsv};
\addlegendentry{ENO 2};
\addplot [color=red,dotted,mark=x,mark options={solid},forget plot]
  table[]{tikz/data/ordertest_space_PRK4-10.tsv};
\addplot [color=mycolor1,solid,mark=x,mark options={solid}]
  table[]{tikz/data/ordertest_space_PRK4-11.tsv};
\addlegendentry{ENO 3};
\addplot [color=mycolor1,dotted,mark=x,mark options={solid},forget plot]
  table[]{tikz/data/ordertest_space_PRK4-12.tsv};
\end{axis}
\end{tikzpicture}%
	\end{center}
	\vspace{-0.4cm}\caption{\label{fig:order_test_PRK4} Spatial order test for PRK4 with FE as inner integrator and three different spatial orders using a) upwind differences (left plot) and b) the ENO scheme (right plot). The error is computed using the 1-norm. On each plot, the solid lines represent the calculated error whereas the dotted line shows the expected error. }
\end{figure}
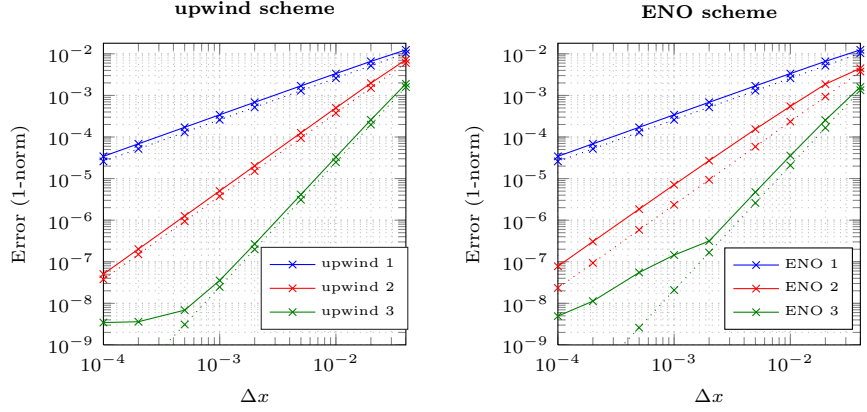

\begin{figure}
	\begin{center}
		\figname{ordertest_time_lin_adv}
%
%
\definecolor{mycolor1}{rgb}{0.00000,0.49804,0.00000}%
\newcommand{\figWidth}{0.25\textwidth} 
\newcommand{\figHeight}{4cm} 
\newcommand{\figSpacingRight}{2cm} 
\begin{tikzpicture}
\begin{axis}[%
width=0,
height=0,
scale only axis,
clip=false,
xmin=0,
xmax=1,
ymin=0,
ymax=1,
hide axis
]
\node[align=center, text=black] at (\figWidth+\figSpacingRight/2,\figHeight+0.9cm) {Order test (time) for linear advection};
\end{axis}

\begin{axis}[%
width=\figWidth,
height=\figHeight,
at={(0,0.0)},
scale only axis,
xmode=log,
xmin=0.0001,
xmax=0.04,
xminorticks=true,
xlabel={$\Dt$},
xmajorgrids,
xminorgrids,
ymode=log,
ymin=1e-09,
ymax=0.04,
yminorticks=true,
ytick = {1e-9, 1e-8, 1e-7, 1e-6, 1e-5, 1e-4, 1e-3, 1e-2, 1e-1},
ylabel={Error (1-norm)},
ymajorgrids,
yminorgrids,
grid style={dotted},
axis background/.style={fill=white},
title style={font=\scriptsize},
title={$\boldsymbol{\epsi = 10^{-5}}$},
legend style={at={(0.55,0.05)},anchor=south west,legend cell align=left,align=left,draw=white!15!black}
]
\addplot [color=blue,solid,mark=o,mark options={solid}]
  table[]{tikz/data/ordertest_time_linadv-1.tsv};
\addlegendentry{PFE}
\addplot [color=blue,dotted,mark=o,mark options={solid},forget plot]
  table[]{tikz/data/ordertest_time_linadv-2.tsv};
\addplot [color=red,solid,mark=o,mark options={solid}]
  table[]{tikz/data/ordertest_time_linadv-3.tsv};
\addlegendentry{PRK2}
\addplot [color=red,dotted,mark=o,mark options={solid},forget plot]
  table[]{tikz/data/ordertest_time_linadv-4.tsv};
\addplot [color=mycolor1,solid,mark=o,mark options={solid}]
  table[]{tikz/data/ordertest_time_linadv-5.tsv};
\addlegendentry{PRK4}
\addplot [color=mycolor1,dotted,mark=o,mark options={solid},forget plot]
  table[]{tikz/data/ordertest_time_linadv-6.tsv};
\end{axis}

\begin{axis}[%
width=\figWidth,
height=\figHeight,
at={(\figWidth+\figSpacingRight,0)},
scale only axis,
xmode=log,
xmin=0.0001,
xmax=0.04,
xminorticks=true,
xlabel={$\Dt$},
xmajorgrids,
xminorgrids,
ymode=log,
ymin=1e-09,
ymax=0.04,
yminorticks=true,
ytick = {1e-9, 1e-8, 1e-7, 1e-6, 1e-5, 1e-4, 1e-3, 1e-2, 1e-1},
ylabel={Error (1-norm)},
ymajorgrids,
yminorgrids,
grid style={dotted},
axis background/.style={fill=white},
title style={font=\scriptsize},
title={$\boldsymbol{\epsi = 10^{-8}}$}
]
\addplot [color=blue,solid,mark=o,mark options={solid},forget plot]
  table[]{tikz/data/ordertest_time_linadv-7.tsv};
\addplot [color=blue,dotted,mark=o,mark options={solid},forget plot]
  table[]{tikz/data/ordertest_time_linadv-8.tsv};
\addplot [color=red,dotted,mark=o,mark options={solid},forget plot]
  table[]{tikz/data/ordertest_time_linadv-10.tsv};
\addplot [color=red,solid,mark=o,mark options={solid},forget plot]
  table[]{tikz/data/ordertest_time_linadv-9.tsv};
\addplot [color=mycolor1,solid,mark=o,mark options={solid},forget plot]
  table[]{tikz/data/ordertest_time_linadv-11.tsv};
\addplot [color=mycolor1,dotted,mark=o,mark options={solid},forget plot]
  table[]{tikz/data/ordertest_time_linadv-12.tsv};
\end{axis}
\end{tikzpicture}%
	\end{center}
	\vspace{-0.4cm}\caption{\label{fig:order_test_time_lin_adv} Temporal order test for the different projective integration methods under study: PFE, PRK2 and PRK4 with FE as inner integrator and upwind differences of order 3 in space comparing results for $\epsi=10^{-5}$ (left plot) and $\epsi=10^{-8}$ (right plot). The error is computed using the 1-norm. On each plot, the solid lines represent the calculated error whereas the dotted line shows the expected error. }
\end{figure}
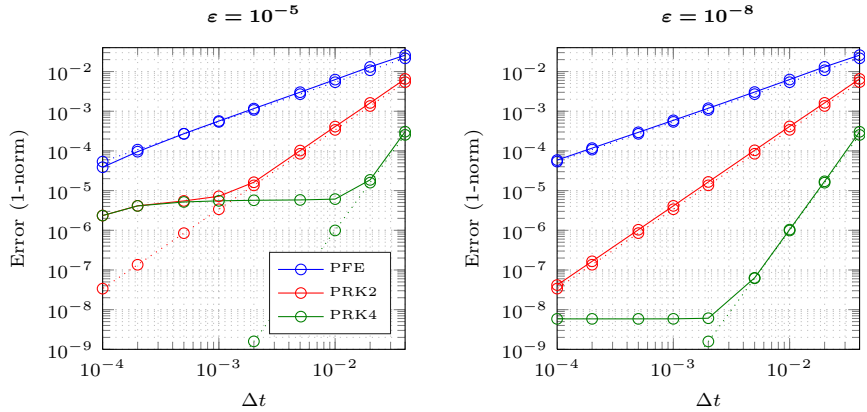

\paragraph{Numerical time order (figure \ref{fig:order_test_time_lin_adv})}
The temporal order of the projective integration methods is demonstrated in a slightly different manner as outlined above. Now, we fix the grid spacing $\dx = 0.05$ and vary the outer time step $\Dt$ as
\begin{equation} \label{eq:order_test_Dts} 
	\Dt = [0.04; 0.02; 0.01; 0.005; 0.002; 0.001; 0.0005; 0.0002; 0.0001].
\end{equation}
We will calculate the error on time $T=0.04$. The other simulation parameters remain the same as above. The error is now calculated by taking the 1-norm of the difference between the numerical solution and the analytical solution of the (linear) semi-discretized system \eqref{eq:semidiscrete}. By doing so we take into account the discretization error in space such that we only look at the error in time. The simulations are run for PFE, PRK2 and PRK4 with FE as inner integrator and upwind differences of order 3 in space. The results can be seen in figure \ref{fig:order_test_time_lin_adv} in which we also look at the influence of the value of $\epsi$ by choosing $\epsi=10^{-5}$ (left hand side plot) and $\epsi=10^{-8}$ (right hand side plot). It is clearly indicated on the plots that for small values of $\Dt$ the error curves level off towards the value of the dominant term of $O(\epsi)$ in expression \eqref{eq:local_trunc_error} since the other terms in \eqref{eq:local_trunc_error} are negligible. For $\epsi=10^{-5}$ the numerical orders are 1.07 and 2.00 for PFE and PRK2, respectively. In this case there were too few meaningful points to reliably estimate the numerical order of the PRK4 method. For $\epsi=10^{-8}$ the numerical orders are 1.02, 2.00 and 4.07, corresponding to the expected time order of the different methods.

\subsection{Burgers' equation in 1D} \label{subsec:burgers}
As a second example, we consider the inviscid Burgers' equation in one spatial dimension, 
\begin{equation} \label{eq:burgers_eq}
	\partial_t u + \partial_x\left(\frac{u^2}{2}\right) = 0.
\end{equation}
We compute the solution for $t \in [0,1]$ and $x \in [0,2]$. We impose periodic boundary conditions and consider three different initial conditions: a Gaussian pulse $u_1(x)$, a sinc wave packet $u_2(x)$ and a sine wave $u_3(x)$ given by,
\begin{equation} \label{eq:burgers_eq_init_cond} 
	u_1(x) = \exp(-25(x-1)^2), \quad\quad u_2(x) = \text{sinc}(5(x - 1)), \quad\quad u_3(x) = \sin(\pi x).
\end{equation}

For the relaxation method, we use the kinetic equation \eqref{eq:kin_eq_1d} together with the more realistic form of the Maxwellian given in \eqref{eq:maxwellian_realistic_1d} with $F(u)=u^2/2$. (Note that, compared to the previous example with linear advection, this only requires a change in one line of the code.) We discretize the velocity space using $J=2$ velocities which are obtained as the nodes of Gauss-Hermite quadrature with $\sigma^2 = 1$ in equation \eqref{eq:V_measure_gaussian}. The inner integrator is a space-time discretization of equation \eqref{eq:kin_eq_1d}, in which we again choose the standard upwind spatial discretizations of order $1$, $2$ and $3$ with grid spacing $\dx$ (that will vary throughout the experiments), combined with a forward Euler time discretization with $\dt=\epsi$ and $\epsi=10^{-8}$.  We also consider the third order ENO scheme. The projective integration method uses $K=2$ inner steps, and an outer time step of size $\Dt$ (that will also vary). 

We first perform a numerical simulation using a third order ENO spatial discretization and a fourth order projective Runge-Kutta method (PRK4) with $\dx = 10^{-2}$ and $\Dt = 5\cdot 10^{-3}$. The results are shown in figure \ref{fig:burgers_PRK4_FE_ENO3}. We clearly see that the discontinuities are nicely captured without the appearance of spurious oscillations. 

\begin{figure}[t]
	\begin{center}
		\figname{burgers_eq_1d}
%
%
\newcommand{\figWidth}{0.25\textwidth} 
\newcommand{\figHeight}{4cm} 
\newcommand{\figSpacingRight}{1.5cm} 
\begin{tikzpicture} 

\begin{axis}[%
width=0,
height=0,
scale only axis,
clip=false,
xmin=0,
xmax=1,
ymin=0,
ymax=1,
hide axis
]
\node[align=center, text=black] at (3*\figWidth/2+\figSpacingRight,\figHeight+0.9cm) {Evolution of Burgers' equation for several waves (PRK4 + FE + ENO3)};
\end{axis}

\begin{axis}[%
width=\figWidth,
height=\figHeight,
at={(0,0.0)},
scale only axis,
xmin=0,
xmax=2,
xlabel={$x$},
ymin=-0.001,
ymax=1,
ytick = {0, 0.2, ..., 1},
ylabel={$u_1$},
axis background/.style={fill=white},
title style={font=\scriptsize\bfseries},
title={pulse},
axis x line*=bottom,
axis y line*=left
]
\addplot [color=blue,solid,line width=1.0pt,forget plot]
  table[]{tikz/data/burgers_1d-13.tsv};
\addplot [color=blue,dashed,forget plot]
  table[]{tikz/data/burgers_1d-14.tsv};
\addplot [color=blue,dashed,forget plot]
  table[]{tikz/data/burgers_1d-15.tsv};
\addplot [color=blue,dashed,forget plot]
  table[]{tikz/data/burgers_1d-16.tsv};
\addplot [color=red,solid,line width=1.0pt,forget plot]
  table[]{tikz/data/burgers_1d-17.tsv};
\end{axis}

\begin{axis}[%
width=\figWidth,
height=\figHeight,
at={(\figWidth+\figSpacingRight,0)},
scale only axis,
xmin=0,
xmax=2,
xlabel={$x$},
ymin=-0.24,
ymax=1,
ytick = {-0.2, 0, ..., 1},
ylabel={$u_2$},
axis background/.style={fill=white},
title style={font=\scriptsize\bfseries},
title={sinc wave},
axis x line*=bottom,
axis y line*=left
]
\addplot [color=blue,solid,line width=1.0pt,forget plot]
  table[]{tikz/data/burgers_1d-1.tsv};
\addplot [color=blue,dashed,forget plot]
  table[]{tikz/data/burgers_1d-2.tsv};
\addplot [color=blue,dashed,forget plot]
  table[]{tikz/data/burgers_1d-3.tsv};
\addplot [color=blue,dashed,forget plot]
  table[]{tikz/data/burgers_1d-4.tsv};
\addplot [color=red,solid,line width=1.0pt,forget plot]
  table[]{tikz/data/burgers_1d-5.tsv};
\addplot [color=black,solid,forget plot]
  table[]{tikz/data/burgers_1d-6.tsv};
\end{axis}

\begin{axis}[%
width=\figWidth,
height=\figHeight,
at={(2*\figWidth+2*\figSpacingRight,0)},
scale only axis,
xmin=0,
xmax=2,
xlabel={$x$},
ymin=-1.001,
ymax=1,
ytick = {-1, -0.8, ..., 1},
yticklabel style={/pgf/number format/.cd,fixed},
ylabel={$u_3$},
axis background/.style={fill=white},
title style={font=\scriptsize\bfseries},
title={sine wave},
axis x line*=bottom,
axis y line*=left
]
\addplot [color=blue,solid,line width=1.0pt,forget plot]
  table[]{tikz/data/burgers_1d-7.tsv};
\addplot [color=blue,dashed,forget plot]
  table[]{tikz/data/burgers_1d-8.tsv};
\addplot [color=blue,dashed,forget plot]
  table[]{tikz/data/burgers_1d-9.tsv};
\addplot [color=blue,dashed,forget plot]
  table[]{tikz/data/burgers_1d-10.tsv};
\addplot [color=red,solid,line width=1.0pt,forget plot]
  table[]{tikz/data/burgers_1d-11.tsv};
\addplot [color=black,solid,forget plot]
  table[]{tikz/data/burgers_1d-12.tsv};
\end{axis}
\end{tikzpicture}%
	\end{center}
	\vspace{-0.4cm}\caption{\label{fig:burgers_PRK4_FE_ENO3} Evolution of the numerical solution of Burgers' equation obtained with PRK4 and FE as inner integrator and a third order ENO scheme \added{using $\dx=10^{-2}$}. }
\end{figure}
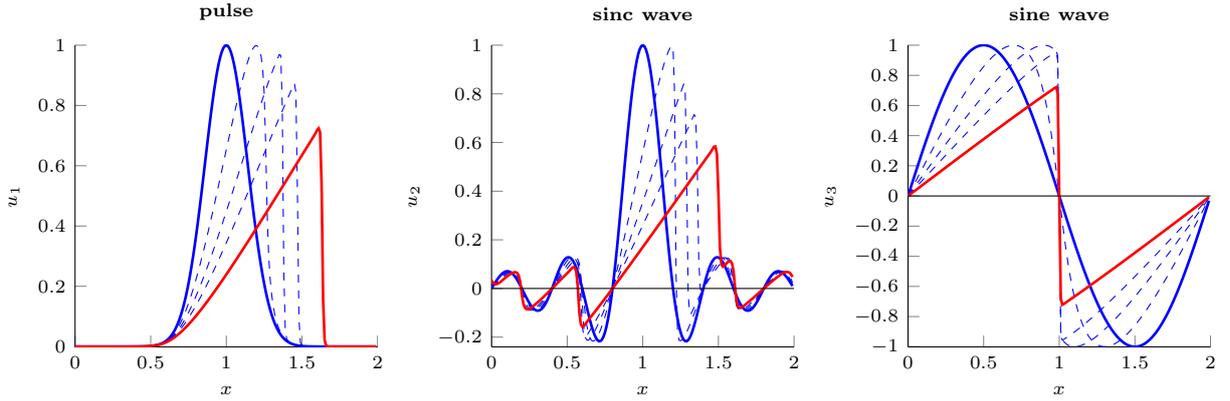

Next, we also investigate the temporal order of the methods. Since the analytical solution of Burgers' equation is not available explicitly, the error is computed with respect to a reference solution that is obtained using a high-order simulation of \eqref{eq:burgers_eq} with the PRK4 method with FE as inner integrator and upwind differences of order 3 in space with grid spacing $\dx = 0.05$ and time step $\dt = 10^{-8}$. The outer time step is chosen to be $\Dt=10^{-6}$. Then, we vary $\Dt$ as given by \eqref{eq:order_test_Dts} and we again examine the influence of the value of $\epsi$ by choosing $\epsi=10^{-5}$ (left hand side plot) and $\epsi=10^{-8}$ (right hand side plot) in figure \ref{fig:order_test_time_burgers}. It is observed on the plots that for small values of $\Dt$ the error curves level off towards the value of the dominant term of $O(\epsi)$ in expression \eqref{eq:local_trunc_error} since the other terms in \eqref{eq:local_trunc_error} are negligible. For $\epsi=10^{-5}$ the numerical orders are 1.07 and 2.00 whereas for $\epsi=10^{-8}$ the numerical orders are 1.00, 2.00 and 4.18 which are in agreement with the expected time order of the different methods. For the temporal order test the initial solution consists of a Gauss curve centered around the middle of the domain $x\in[0,2]$ (i.e. function $u_1(x)$ in \eqref{eq:burgers_eq_init_cond}) and the error is calculated at $t=0.04$. The results are comparable to the linear advection case, see figure \ref{fig:order_test_time_burgers}.

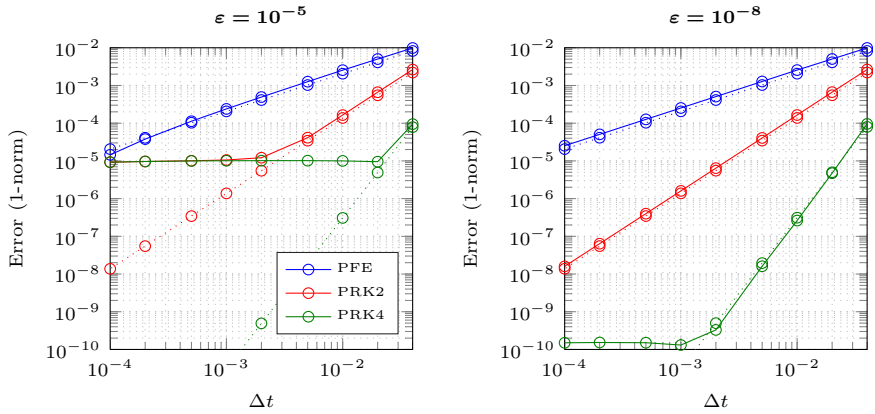
\begin{figure}[t]
	\begin{center}
		\figname{ordertest_time_burgers_eq}
%
%
\definecolor{mycolor1}{rgb}{0.00000,0.49804,0.00000}%
\newcommand{\figWidth}{0.25\textwidth} 
\newcommand{\figHeight}{4cm} 
\newcommand{\figSpacingRight}{2cm} 

\begin{tikzpicture}
\begin{axis}[%
width=0,
height=0,
scale only axis,
clip=false,
xmin=0,
xmax=1,
ymin=0,
ymax=1,
hide axis
]
\node[align=center, text=black] at (\figWidth+\figSpacingRight/2,\figHeight+0.9cm) {Order test (time) for Burgers' equation};
\end{axis}

\begin{axis}[%
width=\figWidth,
height=\figHeight,
at={(0,0.0)},
scale only axis,
xmode=log,
xmin=0.0001,
xmax=0.04,
xminorticks=true,
xlabel={$\Dt$},
xmajorgrids,
xminorgrids,
ymode=log,
ymin=1e-10,
ymax=0.01,
yminorticks=true,
ytick = {1e-10, 1e-9, 1e-8, 1e-7, 1e-6, 1e-5, 1e-4, 1e-3, 1e-2, 1e-1},
ylabel={Error (1-norm)},
ymajorgrids,
yminorgrids,
grid style={dotted},
axis background/.style={fill=white},
title style={font=\scriptsize},
title={$\boldsymbol{\epsi = 10^{-5}}$},
legend style={at={(0.55,0.05)},anchor=south west,legend cell align=left,align=left,draw=white!15!black}
]
\addplot [color=blue,solid,mark=o,mark options={solid}]
  table[]{tikz/data/ordertest_time_burgers-1.tsv};
\addlegendentry{PFE};
\addplot [color=blue,dotted,mark=o,mark options={solid},forget plot]
  table[]{tikz/data/ordertest_time_burgers-2.tsv};
\addplot [color=red,solid,mark=o,mark options={solid}]
  table[]{tikz/data/ordertest_time_burgers-3.tsv};
\addlegendentry{PRK2};
\addplot [color=red,dotted,mark=o,mark options={solid},forget plot]
  table[]{tikz/data/ordertest_time_burgers-4.tsv};
\addplot [color=mycolor1,solid,mark=o,mark options={solid}]
  table[]{tikz/data/ordertest_time_burgers-5.tsv};
\addlegendentry{PRK4};
\addplot [color=mycolor1,dotted,mark=o,mark options={solid},forget plot]
  table[]{tikz/data/ordertest_time_burgers-6.tsv};
\end{axis}

\begin{axis}[%
width=\figWidth,
height=\figHeight,
at={(\figWidth+\figSpacingRight,0)},
scale only axis,
xmode=log,
xmin=0.0001,
xmax=0.04,
xminorticks=true,
xlabel={$\Dt$},
xmajorgrids,
xminorgrids,
ymode=log,
ymin=1e-10,
ymax=0.01,
yminorticks=true,
ytick = {1e-10, 1e-9, 1e-8, 1e-7, 1e-6, 1e-5, 1e-4, 1e-3, 1e-2, 1e-1},
ylabel={Error (1-norm)},
ymajorgrids,
yminorgrids,
grid style={dotted},
axis background/.style={fill=white},
title style={font=\scriptsize},
title={$\boldsymbol{\epsi = 10^{-8}}$}
]
\addplot [color=blue,solid,mark=o,mark options={solid},forget plot]
  table[]{tikz/data/ordertest_time_burgers-7.tsv};
\addplot [color=blue,dotted,mark=o,mark options={solid},forget plot]
  table[]{tikz/data/ordertest_time_burgers-8.tsv};
\addplot [color=red,solid,mark=o,mark options={solid},forget plot]
  table[]{tikz/data/ordertest_time_burgers-9.tsv};
\addplot [color=red,dotted,mark=o,mark options={solid},forget plot]
  table[]{tikz/data/ordertest_time_burgers-10.tsv};
\addplot [color=mycolor1,solid,mark=o,mark options={solid},forget plot]
  table[]{tikz/data/ordertest_time_burgers-11.tsv};
\addplot [color=mycolor1,dotted,mark=o,mark options={solid},forget plot]
  table[]{tikz/data/ordertest_time_burgers-12.tsv};
\end{axis}
\end{tikzpicture}%
	\end{center}
	\vspace{-0.4cm}\caption{\label{fig:order_test_time_burgers} Temporal order test for Burgers' equation for the different projective integration methods under study: PFE, PRK2 and PRK4 with FE as inner integrator and upwind differences of order 3 in space comparing results for $\epsi=10^{-5}$ (left plot) and $\epsi=10^{-8}$ (right plot). The error is computed using the 1-norm. On each plot the solid lines represent the calculated error whereas the dotted line shows the expected error. }
\end{figure}

\subsection{Sod's shock test in 1D} \label{subsec:sod}
Sod's shock test is an important numerical test to check how a numerical method captures shock waves \cite{sod1978survey}. The test involves the Euler equations in one spatial dimension for mass, momentum and energy, $\uSys=(\rho,\rho\vMacroOneD,E)$,
\begin{equation} \label{eq:Euler_eq}
	\begin{dcases}
		\partial_t \rho + \partial_x (\rho\vMacroOneD) = 0, \\
		\partial_t (\rho\vMacroOneD) + \partial_x (\rho\vMacroOneD^2 + P) = 0, \\
		\partial_t E + \partial_x (\added{(E + P)\vMacroOneD}) = 0,
	\end{dcases}
\end{equation}
in which the pressure $P$ is determined via the equation of state:
\begin{equation} \label{eq:euler_eq_state_1d_again}
	P = (\gamma - 1)\left(E - \frac{1}{2}\rho\vMacroOneD^2\right), 
\end{equation}
in which the constant $\gamma$ equals $7/5$ in case of a diatomic perfect gas \cite{sod1978survey}.

As an initial condition, Sod's shock test imposes
\begin{equation} \label{eq:Euler_eq_IC}
	\begin{aligned}
		\rho(x,0) &= \left\{ \begin{array} {l@{\qquad \quad}l} 1 & x \le L/2\\ 0.125 & x > L/2 \end{array} \right.&, \\
		P(x,0) &=  \left\{ \begin{array} {l@{\qquad \quad}l} 1 & x \le L/2\\ 0.1 & x > L/2 \end{array} \right.&, \\
		\vMacroOneD(x,0) &= 0&.
	\end{aligned}
\end{equation}
Additionally, we impose outflow boundary conditions. We perform the simulation over the spatial domain $x \in [0,1]$ and $t \in [0,0.22]$. The particular choice of the time interval allows for a clear visualization of the three different characteristic waves (see below) and an easy comparison with the available literature.

For the relaxation method, we use the kinetic equation \eqref{eq:kin_eq_1d}, in which we discretize the velocity space with $J=2$ velocities corresponding to the nodes of Gauss-Hermite quadrature. The Maxwellian is chosen as
\begin{equation} \label{eq:sod_maxwellian} 
	\Mav(\uSys^\epsi) = \left( \begin{array}{c} \rho + v(\rho\vMacroOneD) \\ \rho\vMacroOneD + v(\rho\vMacroOneD^2 + P) \\ E + v(E + P)\vMacroOneD \end{array} \right).
\end{equation}
The inner integrator uses a third order spatial ENO discretization with $\dx = 5\cdot 10^{-3}$ and a forward Euler time discretization with $\dt = \epsi = 10^{-8}$.  As the outer method, we choose the fourth order projective Runge-Kutta method (PRK4), using $K=2$ inner steps and an outer step of size $\Dt = 0.5\dx$.

The result is illustrated in figure \ref{fig:sod_PRK4_ENO3}, where we plot density, velocity, energy and pressure at time $t=0.22$, along with the analytical solution, calculated with an exact Riemann solver, for comparison.

\begin{figure}[t]
	\begin{center}
		\figname{euler_eq_1d}
%
%
\newcommand{\figWidth}{0.3\textwidth} 
\newcommand{\figHeight}{3.9cm} 
\newcommand{\figSpacingRight}{1.5cm} 
\newcommand{\figSpacingTop}{1.5cm} 
\begin{tikzpicture} 

\begin{axis}[%
width=0,
height=0,
scale only axis,
clip=false,
xmin=0,
xmax=1,
ymin=0,
ymax=1,
hide axis,
]
\node[align=center, text=black]
at (\figWidth+\figSpacingRight/2,2*\figHeight+\figSpacingTop+0.9cm) {Evolution of Sod's shock test (PRK4 + FE + ENO3)};
\end{axis}

\begin{axis}[%
width=\figWidth,
height=\figHeight,
at={(0,\figHeight+\figSpacingTop)},
scale only axis,
xmin=0,
xmax=1,
xlabel={$x$},
ymin=0,
ymax=1.1,
ytick = {0, 0.2, ..., 1},
ylabel={$\rho$},
axis background/.style={fill=white},
title style={font=\scriptsize\bfseries},
title={Density},
axis x line*=bottom,
axis y line*=left,
clip mode=individual 
]
\addplot [color=blue,solid,mark=o,mark options={solid},forget plot]
  table[]{tikz/data/euler_1d-1.tsv};
\addplot [color=red,solid,line width=1.0pt,forget plot]
  table[]{tikz/data/euler_1d-2.tsv};
\end{axis}

\begin{axis}[%
width=\figWidth,
height=\figHeight,
at={(\figWidth+\figSpacingRight,\figHeight+\figSpacingTop)},
scale only axis,
xmin=0,
xmax=1,
xlabel={$x$},
ymin=-0.05,
ymax=1.05,
ytick = {0, 0.2, ..., 1},
ylabel={$\vMacroOneD$},
axis background/.style={fill=white},
title style={font=\scriptsize\bfseries},
title={Velocity},
axis x line*=bottom,
axis y line*=left,
clip mode=individual 
]
\addplot [color=blue,solid,mark=o,mark options={solid},forget plot]
  table[]{tikz/data/euler_1d-3.tsv};
\addplot [color=red,solid,line width=1.0pt,forget plot]
  table[]{tikz/data/euler_1d-4.tsv};
\end{axis}

\begin{axis}[%
width=\figWidth,
height=\figHeight,
at={(0,0)},
scale only axis,
xmin=0,
xmax=1,
xlabel={$x$},
ymin=0,
ymax=2.75,
ytick = {0, 0.5, ..., 2.5},
ylabel={$E$},
axis background/.style={fill=white},
title style={font=\scriptsize\bfseries},
title={Energy},
axis x line*=bottom,
axis y line*=left,
clip mode=individual 
]
\addplot [color=blue,solid,mark=o,mark options={solid},forget plot]
  table[]{tikz/data/euler_1d-5.tsv};
\addplot [color=red,solid,line width=1.0pt,forget plot]
  table[]{tikz/data/euler_1d-6.tsv};
\end{axis}

\begin{axis}[%
width=\figWidth,
height=\figHeight,
at={(\figWidth+\figSpacingRight,0)},
scale only axis,
xmin=0,
xmax=1,
xlabel={$x$},
ymin=0,
ymax=1.1,
ytick = {0, 0.2, ..., 1},
ylabel={$P$},
axis background/.style={fill=white},
title style={font=\scriptsize\bfseries},
title={Pressure},
axis x line*=bottom,
axis y line*=left,
clip mode=individual 
]
\addplot [color=blue,solid,mark=o,mark options={solid},forget plot]
  table[]{tikz/data/euler_1d-7.tsv};
\addplot [color=red,solid,line width=1.0pt,forget plot]
  table[]{tikz/data/euler_1d-8.tsv};
\end{axis}
\end{tikzpicture}%
	\end{center}
	\vspace{-0.4cm}\caption{\label{fig:sod_PRK4_ENO3} Fluid properties in Sod's shock test at $t = 0.22$ obtained with PRK4 and FE as inner integrator and a third order ENO discretization in space \added{using $\dx = 5 \cdot 10^{-3}$}. }
\end{figure}
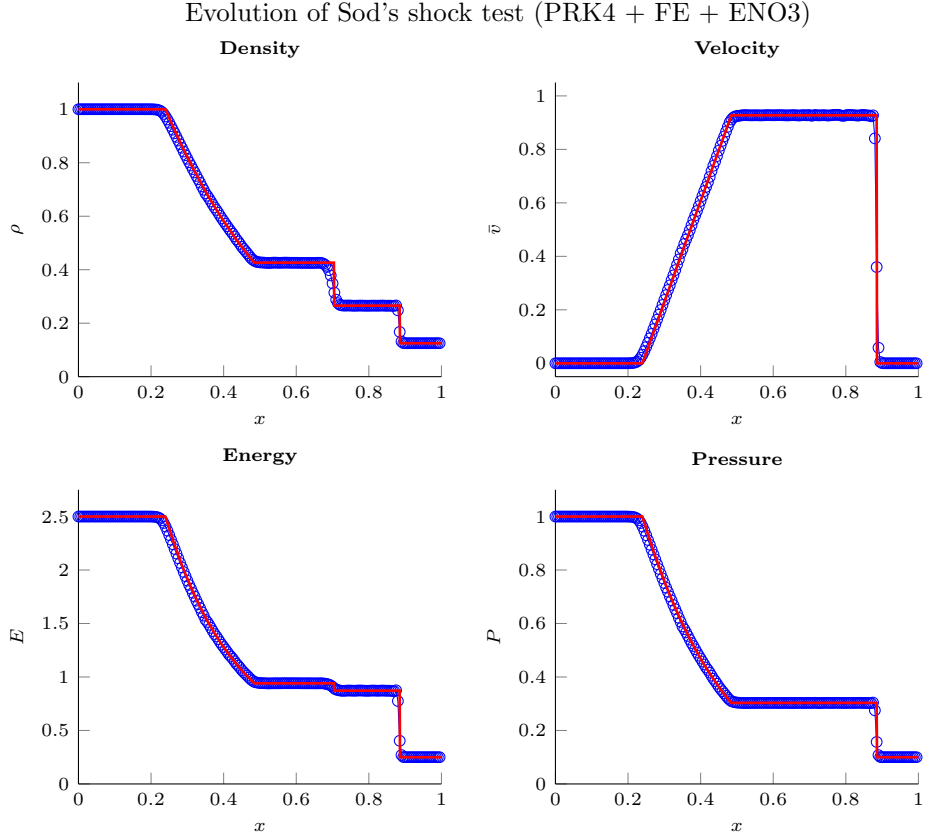

We clearly see the development of three characteristic waves. The first wave corresponds to a rarefaction wave propagating to the left since initially the pressure and density on the left half side of the shock tube are higher than on the right side. Secondly, a contact discontinuity is observed. This right propagating wave corresponds to the initial discontinuity of \eqref{eq:Euler_eq_IC}. Finally, a shock wave propagating to the right has also appeared. It appears that only pressure and velocity remain continuous over the contact discontinuity and exhibit a very flat state. Furthermore, it can be observed that all the quantities are discontinuous over the shock wave. The projective integration method captures all these phenomena, without developing undesired oscillations and without smoothing out the discontinuities too much. 

\subsection{Linear advection in 2D} \label{subsec:lin_2d}
Let us now turn to problems in two spatial dimensions. We again start with the linear advection equation, which, in dimension 2, reads
\begin{equation} \label{eq:2D_lin_adv}
	\partial_t u + a\partial_x u + b\partial_y u = 0,
\end{equation}
in which $a,b \in \R$ are the constant advection speeds along the $x$- and $y$-direction, respectively. The macroscopic unknown function $u(x,y,t)$ denotes the two-dimensional density of particles. In the simulations we integrate over $t \in [0,1]$ and $(x,y) \in [0,1]^2$, and set $a = b = 1$. We impose periodic boundary conditions and start from a Gaussian pulse centered in the middle of the domain:
\begin{equation} \label{eq:2D_lin_adv_IC} 
	u(x,y,0) = \exp\left(-50(x-0.5)^2\right)\exp\left(-50(y - 0.5)^2\right).
\end{equation}

As described in section \ref{sec:2D} we will now solve the two-dimensional kinetic equation \eqref{eq:kin_eq_2d} with the Maxwellian given by \eqref{eq:maxwellian_2d}. The velocity discretization is now determined by the orthogonal velocity method as follows: we fix $R = S = 1$ and calculate $\vmax$ by choosing the (integer) lower bound from expression \eqref{eq:vmax_2d},
\[ \vmax = \left\lceil \sqrt{\frac{12R^2(a^2+b^2)}{(R+1)(2R+1)}}\; \right\rceil. \] 
The inner integrator is a space-time discretization of the kinetic equation \eqref{eq:kin_eq_2d}, in which we take the ENO scheme of order 1, 2 and 3 in space with $\dx = \dy = 0.04$ and the forward Euler scheme in time with $\dt = \epsi = 10^{-8}$. The outer integrator is the fourth order projective Runge-Kutta (PRK4) method, using $K=2$ and $\Dt = 0.3\dx$. 

We compare the obtained numerical results for increasing order in space from $1$ to $3$. This is shown in figure \ref{fig:2D_linadv_PRK4_FE_ENO}.

\begin{figure}[t]
	\begin{center}
		\figname{lin_adv_eq_2d}
		\input{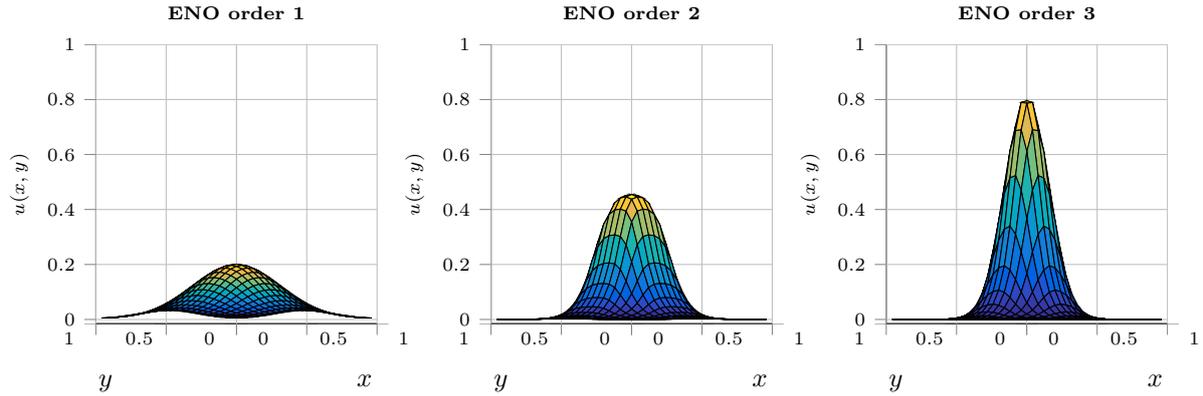}
	\end{center}
	\vspace{-0.4cm}\caption{\label{fig:2D_linadv_PRK4_FE_ENO} Comparison of the numerical solution for linear advection in 2D at $t = 1$ obtained by PRK4 and FE as inner integrator for different orders in space of the ENO scheme \added{with $\dx = \dy = 0.04$}.}
\end{figure}

\subsection{Dam-break problem in 2D}\label{subsec:shawa_2d}
In dam-break problems, one is interested in the evolution of two (or more) regions of water that are separated by a dam, which then is suddenly removed. Such problems are typically modeled by shallow water equations and can be understood as the equivalent of shock tube applications in gas dynamics (see section \ref{subsec:sod}) \cite{LeVeque2002}. In two space dimensions, the shallow water equations take the following form:
\begin{equation}\label{2D_shawa_eq}
	\begin{dcases}
		\partial_t h + \partial_x (h\vMacroOneD_x) + \partial_y (h\vMacroOneD_y) = 0 \\
		\partial_t (h\vMacroOneD_x) + \partial_x (h\vMacroOneD_x^2 + P) + \partial_y (h\vMacroOneD_x \vMacroOneD_y) = 0 \\
		\partial_t (h\vMacroOneD_y) + \partial_x (h\vMacroOneD_x \vMacroOneD_y) + \partial_y (h\vMacroOneD_y^2 + P) = 0, \\
	\end{dcases}
\end{equation}
in which $h$ is the depth of the water, ${\vMacroMultiD = \left(\vMacroOneD_x,\vMacroOneD_y\right)}$ is the (macroscopic) velocity vector, and $P$ is the pressure. All these unknown functions depend on $x, y$ and $t$. The system in \eqref{2D_shawa_eq} contains three partial differential equations for four unknown functions: $h, \vMacroOneD_x, \vMacroOneD_y$ and $P$. Therefore we close the system by the following (hydrostatic) equation of state:
\begin{equation} \label{eq:2D_shawa_eq_of_state}
	P = \frac{1}{2}gh^2.
\end{equation}
The shallow water system is considered over the spatial domain $(x,y)\in[-2.5,2.5]^2$ and $t\in[0,1.5]$.

The initial solution consists of a cylindrical basin of water surrounded by a dam (see \cite{LeVeque2002}), given by:
\begin{equation} \label{eq:2D_shawa_eq_IC}
	\begin{aligned}
		h(x,y,0) &= \left\{ \begin{array} {l@{\qquad \quad}l} 2 & x^2+y^2 \le 0.5 \\ 1 & \text{otherwise} \end{array} \right.& \\
		\vMacroMultiD(x,y,0) &= 0.&
	\end{aligned}
\end{equation}
Furthermore, we impose outflow boundary conditions.

For the relaxation method, we use the kinetic equation \eqref{eq:kin_eq_2d} together with the Maxwellian given by \eqref{eq:maxwellian_2d}. The velocity discretization is determined by the orthogonal velocity method as follows: we fix $R=S=1$ and calculate $\vmax$ as follows,
\begin{equation} \label{eq:sha_wa_2d_vmax}
	\vmax = \left\lceil \sqrt{\frac{24R^2}{(R+1)(2R+1)}}\; \right\rceil.
\end{equation}
In the projective integration framework, the inner integrator is a space-time discretization of the kinetic equation \eqref{eq:kin_eq_2d}, in which we take the upwind scheme of order 3 in space with $\dx = \dy = 0.01$ and the forward Euler scheme in time with $\dt = \epsi = 10^{-8}$. The outer integrator is the fourth order projective Runge-Kutta (PRK4) method, using $K=2$ and $\Dt = 0.3\dx$. The contour plots of the water depth $h$ can be seen in figure \ref{fig:2D_shawa_PRK4_u3}. As in \cite{LeVeque2002}, we find that at $t = 1.5$ the depth of the water near the origin stabilizes around $h \approx 0.96$.

\begin{figure}
	\begin{center}
		\includegraphics[scale=0.60]{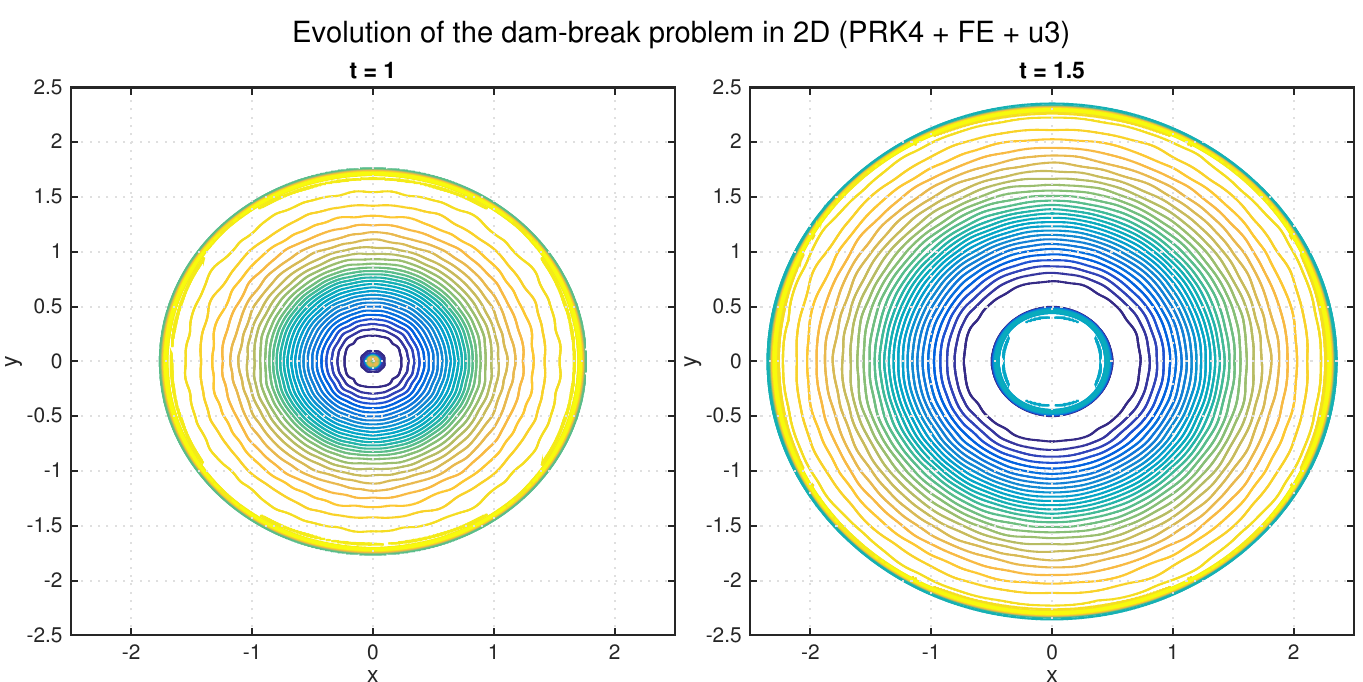}
	\end{center}
	\vspace{-0.4cm}\caption{\label{fig:2D_shawa_PRK4_u3} Contour plots depicting the water depth $h(x,y,t)$ on $t=1$ (left) and $t=1.5$ (right) in the cylindrical dam-break problem obtained with PRK4 and FE as inner integrator and the third order upwind scheme \added{using $\dx = \dy = 10^{-2}$}. }
\end{figure}

\subsection{Euler equations in 2D} \label{subsec:euler_2d}
When extending the Euler equations from 1D to 2D we end up with a system of four equations since there is conservation of momentum along both the $x$- and $y$-direction. This leads to the following system of equations:
\begin{equation} \label{eq:2D_Euler_eq}
	\begin{dcases}
		\partial_t \rho + \partial_x (\rho\vMacroOneD_x) + \partial_y (\rho\vMacroOneD_y) = 0 \\
		\partial_t (\rho\vMacroOneD_x) + \partial_x (\rho\vMacroOneD_x^2 + P) + \partial_y (\rho\vMacroOneD_x \vMacroOneD_y) = 0 \\
		\partial_t (\rho\vMacroOneD_y) + \partial_x (\rho\vMacroOneD_x \vMacroOneD_y) + \partial_y (\rho\vMacroOneD_y^2 + P) = 0 \\
		\partial_t E + \partial_x ((E + P)\vMacroOneD_x) + \partial_y ((E + P)\vMacroOneD_y) = 0.
	\end{dcases}
\end{equation}
In system \eqref{eq:2D_Euler_eq} the unknown functions $\rho$, ${\vMacroMultiD = \left(\vMacroOneD_x,\vMacroOneD_y\right)}$, $P$ and $E$ all depend on $x, y$ and $t$. Similarly to the one-dimensional situation, system \eqref{eq:2D_Euler_eq} now consists of four partial differential equations for five unknown functions: $\rho, \vMacroOneD_x, \vMacroOneD_y, P$ and $E$. Therefore we close the system by the following equation of state:
\begin{equation} \label{eq:euler_eq_state_2d}
	P = (\gamma - 1)\left(E - \frac{1}{2}\rho\norm{\boldsymbol{\bar v}}^2\right),
\end{equation} 
with $\gamma = 7/5$. We consider the Euler system over the spatial domain $(x,y) \in [-0.5,0.5]^2$ and $t \in [0,0.18]$.

The extension of the initial discontinuous configuration \eqref{eq:Euler_eq_IC} of Sod's shock test in 1D to 2D is given by (see \cite{Aregba-Driollet2000}):
\begin{equation} \label{eq:2D_Euler_eq_IC}
	\begin{aligned}
		\rho(x,y,0) &= \left\{ \begin{array} {l@{\qquad \quad}l} 0.1 & xy \le 0 \\ 1 & \text{otherwise} \end{array} \right.& \\
		P(x,y,0) &= \left\{ \begin{array} {l@{\qquad \quad}l} 0.1 & xy \le 0 \\ 1 & \text{otherwise} \end{array} \right.& \\
		\vMacroMultiD(x,y,0) &= 0,&
	\end{aligned}
\end{equation}
which is also called a double Sod tube. Furthermore, we impose outflow boundary conditions.

For the relaxation method and velocity discretization, we use the same setting as in section \ref{subsec:shawa_2d}.
In the projective integration framework, the inner integrator is a space-time discretization of the kinetic equation \eqref{eq:kin_eq_2d}, in which we take the ENO scheme of order 3 in space with $\dx = \dy = 0.01$ and the forward Euler scheme in time with $\dt = \epsi = 10^{-8}$. The outer integrator is the fourth order projective Runge-Kutta (PRK4) method, using $K=2$ and $\Dt = 0.3\dx$. The results can be seen in figure \ref{fig:2D_sod_PRK4_ENO3} and correspond to those in \cite{Aregba-Driollet2000}.

\begin{figure}[h!]
	\begin{center}
		\includegraphics[scale=0.60]{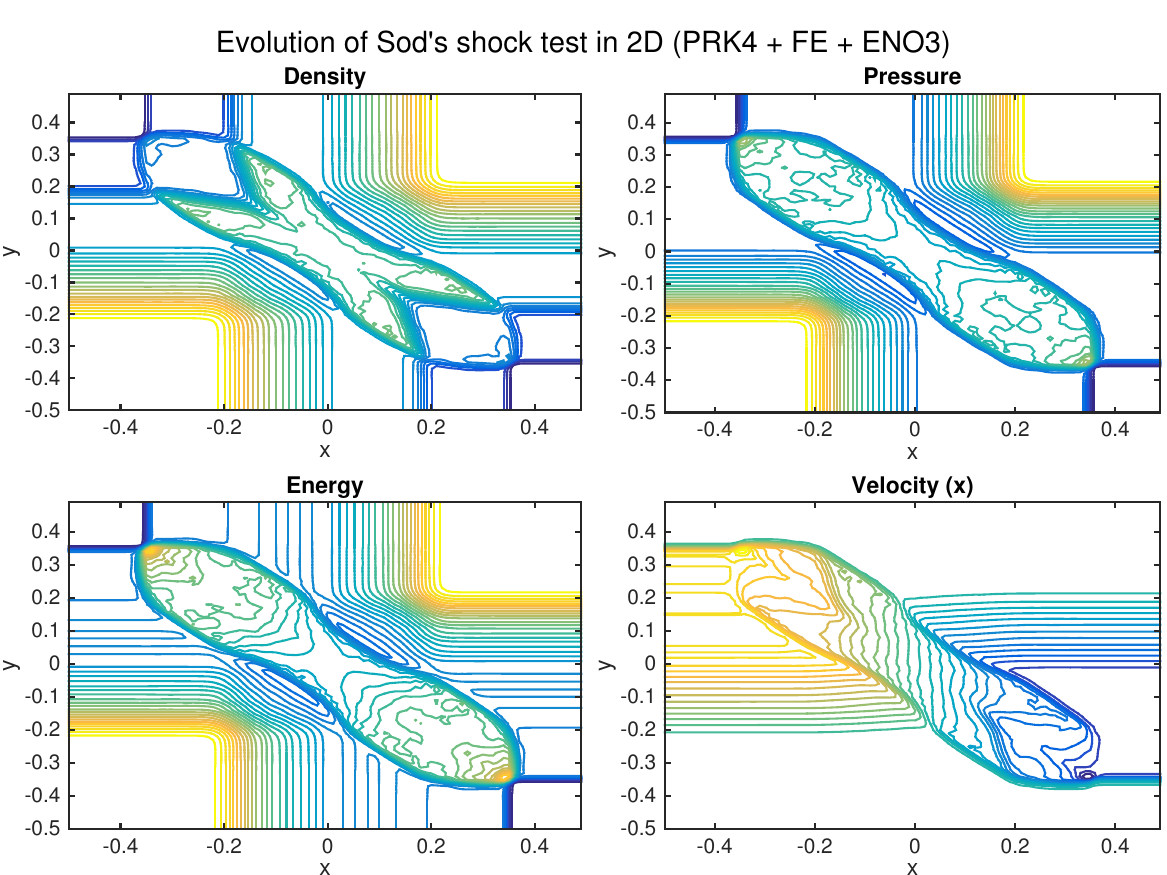}
	\end{center}
	\vspace{-0.4cm}\caption{\label{fig:2D_sod_PRK4_ENO3} Fluid properties in Sod's shock test in 2D at $t = 0.18$ obtained with PRK4 and FE as inner integrator and a third order ENO scheme \added{using $\dx = \dy = 10^{-2}$}. }
\end{figure}


\section{Conclusions} \label{sec:conclusions}
We presented a general, high-order, fully explicit, relaxation scheme for systems of nonlinear hyperbolic conservation laws in multiple dimensions, by approximating the nonlinear hyperbolic conservation law by a kinetic equation with BGK source term, which is, in turn, discretized and integrated using a projective integration method. After taking a few small (inner) steps with the direct forward Euler method, an estimate of the time derivative is used in an (outer) Runge-Kutta method of arbitrary order.

Unlike other methods based on relaxation \cite{Jin1995,Aregba-Driollet2000}, the projective integration method does not rely on a splitting technique, but only on an appropriate selection of time steps using a naive explicit discretization method.  Its main advantage is its generality and ease of use: implementing the method for a different system of hyperbolic conservation laws only requires changing the definition of the Maxwellian.

We showed that, with an appropriate choice of inner step size, the time step restriction on the outer time step is similar to the CFL condition for the hyperbolic conservation law. Moreover, the number of inner time steps is also independent of the scaling parameter. We analyzed stability and consistency, and illustrated with numerical results on a set of test problems of varying complexity.

\added{For future work, it is interesting to extend and analyze the proposed technique to quasilinear systems of conservation laws and hyperbolic equations with source terms.}

\bibliographystyle{plain} \bibliography{refs}

\end{document}